\let\old@tocline\@tocline
\let\section@tocline\@tocline
\newcommand{\subsection@dotsep}{4.5}
\newcommand{\subsubsection@dotsep}{4.5}
     \leaders\hbox{$\m@th
        \mkern \subsection@dotsep mu\hbox{.}\mkern \subsection@dotsep mu$}\hfill
\let\subsection@tocline\@tocline
\let\@tocline\old@tocline
     \leaders\hbox{$\m@th
        \mkern \subsubsection@dotsep mu\hbox{.}\mkern \subsubsection@dotsep mu$}\hfill
\let\subsubsection@tocline\@tocline
\let\@tocline\old@tocline
\let\old@l@subsection\l@subsection
\let\old@l@subsubsection\l@subsubsection
\def\@tocwriteb#1#2#3{%
  \begingroup
    \@xp\def\csname #2@tocline\endcsname##1##2##3##4##5##6{%
      \ifnum##1>\c@tocdepth
      \else \sbox\z@{##5\let\indentlabel\@tochangmeasure##6}\fi}%
    \csname l@#2\endcsname{#1{\csname#2name\endcsname}{\@secnumber}{}}%
  \endgroup
  \addcontentsline{toc}{#2}%
    {\protect#1{\csname#2name\endcsname}{\@secnumber}{#3}}}%
\newlength{\@tocsectionindent}
\newlength{\@tocsubsectionindent}
\newlength{\@tocsubsubsectionindent}
\newlength{\@tocsectionnumwidth}
\newlength{\@tocsubsectionnumwidth}
\newlength{\@tocsubsubsectionnumwidth}
\newcommand{\settocsectionnumwidth}[1]{\setlength{\@tocsectionnumwidth}{#1}}
\newcommand{\settocsubsectionnumwidth}[1]{\setlength{\@tocsubsectionnumwidth}{#1}}
\newcommand{\settocsubsubsectionnumwidth}[1]{\setlength{\@tocsubsubsectionnumwidth}{#1}}
\newcommand{\settocsectionindent}[1]{\setlength{\@tocsectionindent}{#1}}
\newcommand{\settocsubsectionindent}[1]{\setlength{\@tocsubsectionindent}{#1}}
\newcommand{\settocsubsubsectionindent}[1]{\setlength{\@tocsubsubsectionindent}{#1}}
\renewcommand{\l@section}{\section@tocline{1}{\@tocsectionvskip}{\@tocsectionindent}{}{\@tocsectionformat}}%
\renewcommand{\l@subsection}{\subsection@tocline{2}{\@tocsubsectionvskip}{\@tocsubsectionindent}{}{\@tocsubsectionformat}}%
\renewcommand{\l@subsubsection}{\subsubsection@tocline{3}{\@tocsubsubsectionvskip}{\@tocsubsubsectionindent}{}{\@tocsubsubsectionformat}}%
\newcommand{\@tocsectionformat}{}
\newcommand{\@tocsubsectionformat}{}
\newcommand{\@tocsubsubsectionformat}{}
\def\csname toc@1format\endcsname{\@tocsectionformat}
\def\csname toc@2format\endcsname{\@tocsubsectionformat}
\def\csname toc@3format\endcsname{\@tocsubsubsectionformat}
\newcommand{\settocsectionformat}[1]{\renewcommand{\@tocsectionformat}{#1}}
\newcommand{\settocsubsectionformat}[1]{\renewcommand{\@tocsubsectionformat}{#1}}
\newcommand{\settocsubsubsectionformat}[1]{\renewcommand{\@tocsubsubsectionformat}{#1}}
\newlength{\@tocsectionvskip}
\newcommand{\settocsectionvskip}[1]{\setlength{\@tocsectionvskip}{#1}}
\newlength{\@tocsubsectionvskip}
\newcommand{\settocsubsectionvskip}[1]{\setlength{\@tocsubsectionvskip}{#1}}
\newlength{\@tocsubsubsectionvskip}
\newcommand{\settocsubsubsectionvskip}[1]{\setlength{\@tocsubsubsectionvskip}{#1}}
\patchcmd{\tocsection}{\indentlabel}{\makebox[\@tocsectionnumwidth][l]}{}{}
\patchcmd{\tocsubsection}{\indentlabel}{\makebox[\@tocsubsectionnumwidth][l]}{}{}
\patchcmd{\tocsubsubsection}{\indentlabel}{\makebox[\@tocsubsubsectionnumwidth][l]}{}{}
\newcommand{\@sectypepnumformat}{}
\renewcommand{\contentsline}[1]{%
  \expandafter\let\expandafter\@sectypepnumformat\csname @toc#1pnumformat\endcsname%
  \csname l@#1\endcsname}
\newcommand{\@tocsectionpnumformat}{}
\newcommand{\@tocsubsectionpnumformat}{}
\newcommand{\@tocsubsubsectionpnumformat}{}
\newcommand{\setsectionpnumformat}[1]{\renewcommand{\@tocsectionpnumformat}{#1}}
\newcommand{\setsubsectionpnumformat}[1]{\renewcommand{\@tocsubsectionpnumformat}{#1}}
\newcommand{\setsubsubsectionpnumformat}[1]{\renewcommand{\@tocsubsubsectionpnumformat}{#1}}
\renewcommand{\@tocpagenum}[1]{%
  \hfill {\mdseries\@sectypepnumformat #1}}
\let\oldappendix\appendix
\renewcommand{\appendix}{%
  \leavevmode\oldappendix%
  \addtocontents{toc}{%
    \protect\settowidth{\protect\@tocsectionnumwidth}{\protect\@tocsectionformat\sectionname\space}%
    \protect\addtolength{\protect\@tocsectionnumwidth}{2em}}%
}
\let\oldtableofcontents\tableofcontents
\renewcommand{\tableofcontents}{%
  \vspace*{-\linespacing}
  \oldtableofcontents}
\newtheorem{thm}{Theorem}[section] 
\newtheorem{lem}[thm]{Lemma}
\newcommand{\G}{\mathbb{G}}
\newcommand{\ds}{\displaystyle}
\newcommand{\Exp}{{\rm Exp}}
\newcommand{\GL}{{\rm GL}}
\newcommand{\id}{{\rm id}}
\newcommand{\Mat}{{\rm Mat}}
\newcommand{\mf}{\mathfrak}
\newcommand{\wt}{\widetilde}
\newcommand{\indep}{{\rm indep}}
\newcommand{\sfP}{{\sf P}}
\newcommand{\circled}[1]{\raise0.1ex\hbox{\textcircled{\scriptsize{\raise0.2ex\hbox{#1}}}}}
\newcommand{\ouparrow}{\raise0.1ex\hbox{\textcircled{\scriptsize{\raise0.1ex\hbox{$\uparrow$}}}}}
\newcommand{\orightarrow}{ \raise0.1ex\hbox{\textcircled{\scriptsize{\raise0.23ex\hbox{\hspace{0.1mm}$\rightarrow$}}}} }
\renewcommand\section{\@startsection{section}{1}%
  \z@{-.5\linespacing\@plus-.7\linespacing}{.5\linespacing}%
  {\bf \Large {\normalfont\scshape}}}
\renewcommand\subsection{\@startsection{subsection}{2}%
  \z@{-.5\linespacing\@plus-.7\linespacing}{.5\linespacing}%
  {\bf \large {\normalfont\scshape}}}
\renewcommand\subsubsection{\@startsection{subsubsection}{3}%
  \z@{.5\linespacing\@plus.7\linespacing}{.5\linespacing}
   {\bf  {\normalfont\scshape}}}
\begin{document}

\title{
Birational equivalence of exponential matrices}
\subjclass[2020]{Primary 15A54, Secondary 14L30}
\keywords{Matrix theory, Algebraic geometry}
\author[Ryuji Tanimoto]{Ryuji Tanimoto}
\address{Faculty of Education, Shizuoka University, 836 Ohya, Suruga-ku, Shizuoka 422-8529, Japan} 
\email{tanimoto.ryuji@shizuoka.ac.jp}
\maketitle

\begin{abstract}
In this article, we consider birational equivalence of exponential matrices. 
In characteristic zero, we give a birational classification of exponential matrices of 
size $n$-by-$n$ $(n \geq 2)$, which consists of two types. 
And in positive characteristic, we give birational classifications 
of exponential matrices of sizes two-by-two and three-by-three, respectively. 
\end{abstract}

%

\section*{Introduction}

In this article, we consider birational equivalence of exponential matrices. 
We can rephrase exponential matrices as $\G_a$-actions on projective spaces. 
In fact, there exists one-to-one correspondence between 
exponential matrices and $\G_a$-actions on projective spaces 
(see \cite[Theorem 0.2]{Tanimoto 2025}). 
In \cite{Gurjar-Masuda-Miyanishi}, 
unipotent group actions on projective varieties are studied, 
and in part $\G_a$-actions on the $n$-dimensional projective space $\mathbb{P}^n$ in characteristic zero are classified biregularly. 
Even in positive characteristic, it is natural to classify $\G_a$-actions on 
$\mathbb{P}^n$ biregularly. 
But it is too hard to give the biregular classification. 
So, it is natural to classify $\G_a$-actions on $\mathbb{P}^n$ birationally.  
This leads us to study exponential matrices birationally.

Birational equivalence of exponential matrices is defined, as follows: 
Let $k$ be an algebraically closed field of arbitrary characteristic $p$, 
let $\G_a$ denote the additive group of $k$, 
let $k[T]$ be the polynomial ring in one variable over $k$ 
and let $\Mat(n, k[T])$ denote the set of all $n \times n$ matrices 
whose entries belong to $k[T]$. 
Let $A_i(T)$ $(i = 1, 2)$ be exponential matrices of $\Mat(n, k[T])$ 
and let $\mu_i : \G_a \times \mathbb{P}^{n - 1} \to \mathbb{P}^{n - 1}$ 
$(i = 1, 2)$ be the $\G_a$-actions corresponding to $A_i(T)$ $(i = 1, 2)$, respectively. 
We say that the two exponential matrices $A_1(T)$ and $A_2(T)$ 
are {\it birationally equivalent} if there exists a birational transformation $\sigma$ 
of $\mathbb{P}^{n - 1}$ such that the following diagram is commutative: 
\[
\xymatrix@R=36pt@C=36pt@M=6pt{
 \G_a \times \mathbb{P}^{n - 1} \ar[r]^(.6){\mu_1} \ar@{-->}[d]_{\id_{\G_a} \times \sigma}  & \mathbb{P}^{n - 1} \ar@{-->}[d]^\sigma \\
 \G_a \times \mathbb{P}^{n - 1} \ar[r]^(.6){\mu_2}  & \mathbb{P}^{n - 1}
}
\]
If $A_1(T)$ and $A_2(T)$ are birationally equivalent, we write 
\[
 A_1(T) \quad \overset{\rm bir}{\sim} \quad A_2(T) . 
\]
Let $\Mat(n, k[T])^E$ denote the set of all exponential matrices of 
$\Mat(n, k[T])$. 
The relation $\overset{\rm bir}{\sim}$ on $\Mat(n, k[T])^E$ becomes an equivalence relation 
on $\Mat(n, k[T])^E$. 

In this article, we prove the following three thoerems:

\begin{thm}
Assume $p = 0$ and let $n \geq 2$. 
Then there exists a one-to-one correspondence between the set 
\[
\Mat(n, k[T])^E / \overset{\rm bir}{\sim}
\]
and the set 
\[
\left\{ 
\quad 
 I_n , \qquad 
\left(
\begin{array}{c c | c }
 1 & T &  \multirow{2}{*}{ $O_{2, \, n - 2}$ } \\
 0 & 1 & \\
\hline 
 \multicolumn{2}{c |}{ O_{n - 2, \, 2}}  & I_{n - 2} 
\end{array}
\right) 
\quad 
\right\} . 
\] 
\end{thm}

In positive characteristic, we denote by $\sfP$ the set of all $p$-polynomials. 
Let $(\, \sfP^n \,)^\indep$ denote the set consisting of all elements 
$(\alpha_1, \ldots, \alpha_n)$ of $\sfP^n$ such that 
$\alpha_1, \ldots, \alpha_n$ are linearly independent over $k$. 
Clearly, $\GL(n, k)$ acts on $ (\, \sfP^n \,)^\indep$. 
Two elements $\alpha$, $\beta$ of $(\, \sfP^n \,)^\indep$ are 
said to be {\it $\GL(n, k)$-equivalent} if there exists a regular matrix $P$ 
of $\GL(n, k)$ such that $\alpha = \beta P$. 
If $\alpha$ and $\beta$ of $(\, \sfP^n \,)^\indep$ are $\GL(n, k)$-equivalent, 
we write $\alpha \sim_{\GL(n, k)} \beta$. 
Clearly, the relation $\sim_{\GL(n, k)}$ on $(\, \sfP^n \,)^\indep$ 
is an equivalence relation on $(\, \sfP^n \,)^\indep$.  

Geometrically speaking, the following theorem gives a  birational classification 
of $\G_a$-actions on $\mathbb{P}^1$ in positive charactersitic:  

\begin{thm}
There exists a one-to-one correspondence between the set 
\[
\Mat(2, k[T])^E / \overset{ \rm bir }{\sim}
\]
and the set 
\[
\{\, 0 \,\} 
\; \; \sqcup \; \; 
\bigl( \; 
 (\, \sfP \,)^\indep / \sim_{\GL(1, k)} 
\; \bigr) . 
\] 
\end{thm}

Geometrically speaking, the following theorem gives a  birational classification 
of $\G_a$-actions on $\mathbb{P}^2$ in positive charactersitic:

\begin{thm}
There exists a one-to-one correspondence between the set 
\[
\Mat(3, k[T])^E / \overset{\rm bir}{\sim}
\]
and the set 
\[
 \{\, 0 \,\} 
\; \; \sqcup \; \; 
\bigl( \; 
 (\, \sfP \,)^\indep / \sim_{\GL(1, k)} 
\; \bigr)
\; \; \sqcup \; \; 
\bigl(\; 
(\, \sfP^2 \,)^\indep / \sim_{\GL(2, k)}
\; \bigr) . 
\]
\end{thm}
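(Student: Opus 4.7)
The plan is to assemble the statement from the reduction and classification lemmas of this section, with Lemma 2.2 as the combinatorial glue. The argument splits into three stages: reduce every $3 \times 3$ exponential matrix to one of three normal forms, show the three forms yield pairwise separated birational classes, and identify each form with the advertised orbit set.

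For the reduction stage I would chain Lemmas 5.1, 5.7, 5.5 and 5.2. Lemma 5.1 gives $\Mat(3, k[T])^E \leadsto \mf{A}_{1, \, 2}^E \cup \mf{A}_{2, \, 1}^E \cup \mf{J}_{[3]}^E$, with $\mf{J}_{[3]}^E$ omitted when $p = 2$. Lemma 5.7 gives $\mf{J}_{[3]}^E \overset{\rm bir}{\leadsto} \mf{A}_{2, \, 1}^E$, Lemma 5.5 gives $\mf{A}_{2, \, 1}^E \overset{\rm bir}{\leadsto} \mf{A}_{1, \, 1}^E \subset \mf{A}_{1, \, 2}^E$, and Lemma 5.2 splits $\mf{A}_{1, \, 2}^E$ into $(\mf{A}_{1, \, 2}^E)^\indep \sqcup (\mf{A}_{1, \, 1}^E)^\indep \sqcup \{ I_3 \}$. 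Lemma 2.2 together with Lemma 2.1~(1) (which upgrades $\leadsto$ to $\overset{\rm bir}{\leadsto}$) strings these steps together to give
\[
 \Mat(3, k[T])^E \;\; \overset{\rm bir}{\leadsto} \;\; (\, \mf{A}_{1, \, 2}^E \,)^\indep \;\sqcup\; (\, \mf{A}_{1, \, 1}^E \,)^\indep \;\sqcup\; \{ \, I_3 \, \} .
\]

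For the separation stage, $\{ I_3 \}$ is a singleton birational class by Lemma 2.1~(2), so no nontrivial exponential matrix can be birationally equivalent to it; the separation between $(\mf{A}_{1, \, 1}^E)^\indep$ and $(\mf{A}_{1, \, 2}^E)^\indep$ is exactly the content of the lemma immediately preceding this theorem. Hence the three subsets yield pairwise disjoint birational classes. For the identification stage, Lemma~\ref{A} provides a bijection
\[
 (\mf{A}_{1, \, 1}^E)^\indep / \overset{\rm bir}{\sim} \;\; \longleftrightarrow \;\; (\sfP)^\indep / \sim_{\GL(1, k)}
\]
via $A(T) \mapsto \alpha(T)$, and Lemma~\ref{B} provides
\[
 (\mf{A}_{1, \, 2}^E)^\indep / \overset{\rm bir}{\sim} \;\; \longleftrightarrow \;\; (\sfP^2)^\indep / \sim_{\GL(2, k)}
\]
via $A(T) \mapsto (\alpha_1(T), \alpha_2(T))$. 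Pairing the trivial class with $0$ completes the bijection.

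The real content lies in the supporting Lemmas 5.1 through the separation lemma; the present theorem is bookkeeping. The only potentially delicate point in that bookkeeping is ensuring that the successive $\overset{\rm bir}{\leadsto}$ reductions do not conflate pieces that the final enumeration must keep apart, and this is precisely what Lemma 2.1~(2) and the separation lemma rule out, so no further work is required.
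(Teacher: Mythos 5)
Your proposal is correct and follows essentially the same route as the paper: reduce via Lemmas 5.1, 5.2, 5.5 (equivalently 5.4), and 5.7 (equivalently 5.6) to the disjoint union $(\, \mf{A}_{1, \, 2}^E \,)^\indep \sqcup (\, \mf{A}_{1, \, 1}^E \,)^\indep \sqcup \{\, I_3 \,\}$, separate the three pieces using Lemma 2.1~(2) and the lemma on non-equivalence of $(\, \mf{A}_{1, \, 2}^E \,)^\indep$ and $(\, \mf{A}_{1, \, 1}^E \,)^\indep$, and identify the classes within each piece via Lemmas~\ref{A} and~\ref{B}. The paper's proof differs only in presentation (it writes out the $p = 2$ and $p \geq 3$ cases separately), so no further comment is needed.
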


As seen from the above Theorems 0.2 and 0.3, 
these two classification results do not vary in accordance with 
the characteristic $p$, and show the same pattern. 
\\

We employ the following notation and definitions: 

For any commutative ring $R$, we denote by $\Mat(n, R)$ 
the set of all matrices of size $n$-by-$n$ whose entries belong to $R$. 
We denote by $I_n$ the identity matrix of $\Mat(n, R)$. 

If $p > 0$, a polynomial $f(T)$ of $k[T]$ is said to be a {\it $p$-polynomial} 
if $f(T)$ has the following form: 
\begin{align*}
 f(T) 
 = \sum_{i = 0} ^N a_i \, T^{p^i} , 
\qquad 
a_i \in k \quad \text{ for all \quad $0 \leq i \leq N$. } 
\end{align*}

A matrix $A(T)$ of $\Mat(n, k[T])$ 
is said to be an {\it exponential matrix} of $\Mat(n, k[T])$ 
if $A(T)$ satisfies the following conditions {\rm (1)} and {\rm (2)}: 
\begin{enumerate}[label = {\rm (\arabic*)}]
\item $A(T) \, A(T') = A(T + T')$ in $\Mat(n, k[T, T'])$, 
where $k[T, T']$ denotes the polynomial ring in two variables over $k$. 

\item $A(0) = I_n$. 
\end{enumerate}

\section{Preliminaries}

\begin{lem}
The following assertions {\rm (1)} and {\rm (2)} hold true: 
\begin{enumerate}[label = {\rm (\arabic*)}]
\item Let $A(T)$ and $B(T)$ be exponential matrices of $\Mat(n, k[T])^E$. 
If $A(T)$ is equivalent to $B(T)$, i.e., $P \, A(T) \, P^{-1} = B(T)$ for some $P \in \GL(n, k)$, 
then $A(T)$ is birationally equivalent to $B(T)$. 

\item Let $A(T)$ be an exponential matrix of $\Mat(n, k[T])^E$. 
Assume that $A(T)$ is birationally equivalent to $I_n$. 
Then $A(T) = I_n$. 
\end{enumerate} 
\end{lem}

\begin{proof}
The proofs of (1) and (2) are straightforward. 
%

\end{proof}

Assume that the characteristic of $k$ is zero and 
let $\mf{A}$ be a commutative $k$-algebra. 
A $k$-derivation $D$ of $\mf{A}$ is said to be {\it locally nilpotent} 
if for any element $a$ of $\mf{A}$ there exists an integer $n \geq 0$ such that 
$D^\ell(a) = 0$ for all $\ell \geq n$. 

Given a locally nilpotent derivation $D$ of $\mf{A}$, we can define a $k$-algebra homomorphism 
$\varphi_{D, \, T} : \mf{A} \to \mf{A}[T]$ as 
\[
 \varphi_{D, \, T}(f) 
 := 
\sum_{i \geq 0} \frac{D^i(f)}{i!} \, T^i , 
\]
where $\mf{A}[T]$ denotes the polynomial ring in one variable over $\mf{A}$.

Let $\mf{A}[T, T']$ denote the polynomial ring in two variables over $k$. 
We have 
\[
 \varphi_{D, \, T + T'} = \varphi_{D, \, T} \circ \varphi_{D, \, T'} . 
\]
The $k$-algebra homomorphism $\varphi_{D, \, T + T'} : \mf{A} \to \mf{A}[T, T']$ 
in the left hand side is a natural extension of $\varphi_{D, \, T + T' } : \mf{A} \to \mf{A}[T + T']$, 
and the $k$-algebra homomorphism $\varphi_{D, \, T} : \mf{A}[T'] \to \mf{A}[T'][T]$ 
in the right hand side is a natural exstension of $\varphi_{D, \, T} : \mf{A} \to \mf{A}[T]$, 
where we let $\varphi_{D, \, T}(T') := T'$. 
The following diagram is commutative:  
\[
\xymatrix@R=36pt@C=36pt@M=6pt{
 & \mf{A} \ar[r]^{\varphi_{D, \, T}}   \ar@{^(->}[d]
 & \mf{A}[T]   \ar@{^(->}[d]  \\
 \mf{A} \ar[r]^{\varphi_{D, \, T'} } \ar@{-->}@/_24pt/[rr]_{\varphi_{D, \, T + T'}} 
 \ar@/_24pt/[rrd]_{\varphi_{D, \, T + T'}}
 & \mf{A}[T']  \ar@{-->}[r]^{\varphi_{D, \, T}}
 & \mf{A}[T, T'] \\
 & 
 & \mf{A}[T + T'] \ar@{^(->}[u]
}
\]

For any $t \in k$, we can define a map $\epsilon_t : \mf{A}[T] \to \mf{A}$ as 
\[
 \epsilon_t(\, f(T) \, ) := f(t) . 
\]
We can also define a map $\varphi_{D, \, t} : \mf{A} \to \mf{A}$ as 
\[
  \varphi_{D, \, t} := \epsilon_t \circ \varphi_{D, \, T} . 
\]
Clearly, $\varphi_{D, \, t}$ is a $k$-algebra automorphism of $\mf{A}$.

\section{Proof of Theorem 0.1}

In this section, we assume that the characteristic of $k$ is zero. 
Let $n \geq 2$ and let $k[x_1, \ldots, x_n]$ be the polynomial ring in $n$ variables over $k$. 
Let $D$ be a $k$-derivation of $k[x_1, \ldots, x_n]$ 
with the following form: 
\[
 D = 
x_1 \, \frac{\partial}{\partial x_2}
 + 
\sum_{i = 3}^n 
f_i \, 
\frac{\partial}{\partial x_i} , 
\]
where $f_i \in k[x_1, \ldots , x_{i - 1}] \, (\subset k[x_1, \ldots , x_n])$ for all $3 \leq i \leq n$ and 
each $f_i$ is zero or a homogenous polynomial of degree $1$. 
Since $D(x_1) = 0$, we can naturally extend the locally nilpotent derivation $D$ 
of $k[x_1, \ldots, x_n]$ to the locally nilpotent derivation $\wt{D}$ of $k[x_1, \ldots, x_n][1/x_1]$ 
defined by 
\[
 \wt{D}\left( \frac{a}{x_1^\ell} \right) 
 = 
\frac{D(a)}{x_1^\ell} , 
\]
where $a \in k[x_1 , \ldots , x_n]$ and $\ell \geq 0$. 
For simplicity, we write $D$ in place of $\wt{D}$. 
So, we can define a $\G_a$-action $\mu_D : \G_a \times \mathbb{P}^{n - 1} \to \mathbb{P}^{n - 1}$ as 
\[
 \mu_D(t , \; (x_1 : \cdots : x_n)) 
 := 
\left( \, 
 \varphi_{D, \, t}\left( \frac{x_1}{x_1}  \right) 
 \, : \, 
 \varphi_{D, \, t}\left( \frac{x_2}{x_1}  \right) 
 \, : \, 
 \cdots 
  \, : \, 
 \varphi_{D, \, t}\left( \frac{x_n}{x_1}  \right) 
\, \right) . 
\]

The following lemma states that the $\G_a$-action $\mu_D$ is birationally equivalent 
to the $\G_a$-action $\nu : \G_a \times \mathbb{P}^{n - 1} \to \mathbb{P}^{n - 1}$ 
defined by 
\[
  \nu(t , \; (y_1 : \cdots : y_n)) 
:= 
\left( \, 
 y_1
 \, : \, 
 y_2 + t \, y_1 
  \, : \, 
 y_3 
 \, : \, 
 \cdots 
  \, : \, 
 y_n 
\, \right) , 
\]
which corresponds to an exponential matrix 
\[
 \left(
\begin{array}{c c | c }
 1 & 0 &  \multirow{2}{*}{ $O_{2, \, n - 2}$ } \\
 T & 1 & \\
\hline 
 \multicolumn{2}{c |}{ O_{n - 2, \, 2}}  & I_{n - 2} 
\end{array}
\right) . 
\]

\begin{lem}
Let $s := x_2 / x_1$ and let $\sigma : \mathbb{P}^{n - 1} \dasharrow \mathbb{P}^{n - 1}$ be the 
rational map defined by 
\[
 \sigma(x_1 : \cdots : x_n)
 := 
\left( \, 
 1
 \, : \, 
 \frac{x_2}{x_1}  
 \, : \, 
\varphi_{D, \, - s}\left( \frac{x_3}{x_1}  \right) 
 \, : \, 
 \cdots 
  \, : \, 
 \varphi_{D, \, - s}\left( \frac{x_n}{x_1}  \right) 
\, \right) . 
\]
Then the following assertions {\rm (1)} and {\rm (2)} hold true: 
\begin{enumerate}[label = {\rm (\arabic*)}]
\item $\sigma$ is birational. 

\item The following diagram is commutative: 
\[
\xymatrix@R=36pt@C=36pt@M=6pt{
 \G_a \times \mathbb{P}^{n - 1} \ar[r]^(.55){\mu_D} \ar@{-->}[d]_{1 \times \sigma}
 & \mathbb{P}^{n - 1} \ar@{-->}[d]^\sigma \\
\G_a \times \mathbb{P}^{n - 1} \ar[r]^(.55)\nu 
 & \mathbb{P}^{n - 1}
}
\]
\end{enumerate} 
\end{lem}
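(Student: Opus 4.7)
\smallskip
The plan is to trivialize the computation by working in the affine chart
$U_1 := \{x_1 \ne 0\}$ of $\mathbb{P}^{n - 1}$, with coordinates
$u_i := x_i/x_1$ for $2 \le i \le n$.
Because $D(x_1) = 0$ and each $f_i$ is zero or homogeneous of degree one in
$k[x_1, \ldots, x_{i - 1}]$, the extension $\wt{D}$ restricts to a locally
nilpotent derivation of $k[u_2, \ldots, u_n]$ satisfying
$D(u_2) = 1$ and $D(u_i) \in k[u_2, \ldots, u_{i - 1}]$ for $i \geq 3$.
In these coordinates the action $\mu_D(t)$ becomes $u_i \mapsto \varphi_{D, t}(u_i)$,
and in particular $\varphi_{D, t}(u_2) = u_2 + t$.

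\smallskip
For (1), the rational map $\sigma$ restricts on $U_1$ to the polynomial map
\[
 \sigma_{\mathrm{aff}}(u_2, u_3, \ldots, u_n)
 =
\bigl(u_2,\; \varphi_{D, -u_2}(u_3),\; \ldots,\; \varphi_{D, -u_2}(u_n)\bigr)
\]
valued in $U_1$ (the leading coordinate of $\sigma$ is $1$).
The expansion $\varphi_{D, -u_2}(u_i) = \sum_{j \geq 0} ((-u_2)^j / j!)\, D^j(u_i)$
is a finite sum (since $D$ is locally nilpotent), and each $D^j(u_i)$ lies in
$k[u_2, \ldots, u_{i - 1}]$. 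Hence the $i$-th component has the triangular form
$u_i + p_i(u_2, \ldots, u_{i - 1})$ for some polynomial $p_i$, so
$\sigma_{\mathrm{aff}}$ is a triangular polynomial automorphism of
$\mathbb{A}^{n - 1}$, whose inverse is read off inductively via
$u_i = v_i - p_i(v_2, \ldots, v_{i - 1})$. This yields birationality of $\sigma$.

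\smallskip
For (2), I compare the two compositions on $U_1$.
Applying $\mu_D(t)$ first,
$(u_2, \ldots, u_n) \mapsto (u_2 + t, \varphi_{D, t}(u_3), \ldots, \varphi_{D, t}(u_n))$,
and then $\sigma$ sends this to
$\bigl(u_2 + t,\; (\varphi_{D, -(u_2 + t)} \circ \varphi_{D, t})(u_3),\; \ldots\bigr)$.
The group law $\varphi_{D, T_1} \circ \varphi_{D, T_2} = \varphi_{D, T_1 + T_2}$
with $T_1 = -(u_2 + t)$ and $T_2 = t$ collapses each nontrivial entry to
$\varphi_{D, -u_2}(u_i)$.
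In the other order, $\sigma$ first produces
$(u_2, \varphi_{D, -u_2}(u_3), \ldots, \varphi_{D, -u_2}(u_n))$,
after which $\nu(t)$ shifts only the second coordinate by $t$, giving the same tuple.
The two paths coincide, establishing commutativity.

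\smallskip
The main obstacle is really just careful bookkeeping of substitutions: one must
verify that the group law $\varphi_{D, T + T'} = \varphi_{D, T} \circ \varphi_{D, T'}$,
stated for formal parameters $T, T'$ in the preceding subsection, remains valid
after replacing $T, T'$ with the rational functions $-u_2$, $t$, and $-(u_2 + t)$.
Since that identity holds as a polynomial identity in $T, T'$ over the base ring
$k[u_2, \ldots, u_n]$, it survives any base-algebra substitution, and the
computation for (2) then lines up cleanly without further subtlety.
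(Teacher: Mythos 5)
Your proof is correct and follows essentially the same route as the paper: both arguments work in the affine chart $\{x_1 \neq 0\}$ and verify commutativity of the diagram via the group law $\varphi_{D,T} \circ \varphi_{D,T'} = \varphi_{D, T+T'}$ specialized appropriately. The only (minor) divergence is in part (1), where the paper invokes the slice theorem ($A = A^{\delta}[s]$ with $A^{\delta} = \varphi_{D,-s}(A)$, citing van den Essen) to see that $\sigma$ restricts to an automorphism of the chart, whereas you obtain the same conclusion more explicitly by exhibiting $\sigma$ as a triangular polynomial automorphism of $\mathbb{A}^{n-1}$; both are valid.
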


\begin{proof}
(1) Let $A := k[\frac{x_2}{x_1}, \frac{x_3}{x_1} , \ldots, \frac{x_n}{x_1}]$. 
Clearly, $\wt{D}(A) \subset A$. 
So, the locally nilpotent derivation $\wt{D}$ of $k[x_1, \ldots , x_n][1/x_1]$ 
induces a locally nilpotent derivation $\delta$ of $A$. 
Clearly, $\delta(s) = 1$. 
So, we have $A = A^\delta[s]$ and $A^\delta = \varphi_{\delta, \, -s}(A)
 = \varphi_{D, \, -s}(A)$ 
(see \cite{van den Essen}). Thus 
\begin{align*}
A  
 & =  k\left[\,  
\varphi_{D, \, -s}\left( \frac{x_3}{x_1} \right) , \;  
 \ldots, 
\varphi_{D, \, -s}\left( \frac{x_n}{x_1} \right) , \; 
\frac{x_2}{x_1} 
\, \right] . 
\end{align*}

(2) Choose any element $(t, \; (x_1 : \cdots : x_n))$ of $\G_a \times \mathbb{P}^{ n - 1}$. 
Write
\[
 \sigma(x_1 : \cdots : x_n) = (y_1 : \cdots : y_n) . 
\]
We have 
\begin{align*}
 & (\sigma \circ \mu_D)(t, \; (x_1 : \cdots : x_n))  \\
 & \quad = \sigma 
\left( \, 
 1 
 \, : \, 
 \varphi_{D, \, t}\left( \frac{x_2}{x_1}  \right) 
 \, : \, 
 \cdots 
  \, : \, 
 \varphi_{D, \, t}\left( \frac{x_n}{x_1}  \right) 
\, \right) \\
 & \quad = 
\left( \, 
 1 
 \, : \, 
 \varphi_{D, \, t}\left( \frac{x_2}{x_1}  \right) 
 \, : \, 
\varphi_{D, \, -s}\left(   \varphi_{D, \, t}\left( \frac{x_3}{x_1}  \right)  \right)
 \, : \, 
 \cdots 
  \, : \, 
 \varphi_{D, \, -s}\left( \varphi_{D, \, t}\left( \frac{x_n}{x_1}  \right) \right) 
\, \right) \\
 & \quad = 
\left( \, 
 1 
 \, : \, 
 \frac{x_2 + t \, x_1}{x_1} 
 \, : \, 
\varphi_{D, \, -s}\left( \frac{x_3}{x_1}  \right) 
 \, : \, 
 \cdots 
  \, : \, 
 \varphi_{D, \, -s}\left(  \frac{x_n}{x_1}  \right) 
\, \right) \\
 & \quad = 
\left( \, 
 1 
 \, : \, 
 \frac{y_2}{y_1} + t  
 \, : \, 
 \frac{y_3}{y_1}   
 \, : \, 
 \cdots 
  \, : \, 
 \frac{y_n}{y_1}  
\, \right) \\
 & \quad = 
\left( \, 
 y_1 
 \, : \, 
 y_2 + t \, y_1  
 \, : \, 
 y_3 
 \, : \, 
 \cdots 
  \, : \, 
 y_n 
\, \right) \\
 & \quad = 
\bigl( \, \nu \circ (1 \times \sigma)  \, \bigr) (t, \; (x_1 : \cdots : x_n)) . 
\end{align*}

\end{proof}

We shall prove Theorem 0.1. 
Let $A(T)$ be an exponential matrix of $\Mat(n, k[T])$ and 
assume $A(T) \ne I_n$. 
There exists a unique nilpotent matrix $N$ of $\Mat(n, k)$ so that 
\[
 A(T) = \Exp_N(T), 
\]
where $\Exp_N(T)$ is defined as 
\[
 \Exp_N(T) := \sum_{i \geq 0} \frac{T^i}{i!} N^i . 
\]
Since $A(T) \ne I_n$, we have $N \ne O_n$. 
So, there exists a regular matrix $P$ of $\GL(n, k)$ so that 
\[
P \, N \, P^{-1} 
 = 
\left( 
\begin{array}{c c c c c | c}
 J(d_1) & O & \cdots & \cdots &  O & \multirow{5}{*}{$O$}\\
 O & J(d_2) & \ddots  &  & O & \\
 \vdots & \ddots & \ddots & \ddots & \vdots & \\
 \vdots & &  \ddots & J(d_{r - 1}) & O &\\
 O & \cdots & \cdots & O & J(d_r) & \\
\hline 
 \multicolumn{5}{c |}{O} & O  
\end{array}
\right) , 
\]
where each matrix $J(d_s) = (\, \nu_{i, j}^{(s)} \,)$ $(1 \leq s \leq r)$ satisfies  
\[
 \nu_{i, j}^{(s)} 
= 
\left\{
\begin{array}{r @{\qquad} l}
 1 & \text{ if \quad $i - j = 1$} , \\
 0 & \text{ otherwise} .  
\end{array}
\right.
\]
Let $M := P \, N \, P^{-1}$ and 
let $D$ be the $k$-linear derivation of $k[x_1, \ldots, x_n]$ defined by 
\[
\bigl( \, D(x_1), \, D(x_2) , \, \ldots, D(x_n) \, \bigr)
= 
(x_1, \, x_2 , \, \ldots, x_n) \; {^t}M . 
\]
We can express $D$ as 
\[
 D = 
x_1 \, \frac{\partial}{\partial x_2}
 + 
\sum_{i = 3}^n 
f_i \, 
\frac{\partial}{\partial x_i} , 
\]
where $f_i \in k[x_1, \ldots , x_{i - 1}] \, (\subset k[x_1, \ldots , x_n])$ for all $3 \leq i \leq n$ and 
each $f_i$ is zero or a homogenous polynomial of degree $1$. 
Let $B(T) := P \, A(T) \, P^{-1} = \Exp_{M}(T)$ and 
let $\tau : \G_a \times \mathbb{P}^{n -1} \to \mathbb{P}^{n - 1}$  
be the $\G_a$-action corresponding to $B(T)$. 
For all $(t, \, (x_1 \, : \, \cdots \, : \, x_n)) \in \G_a \times \mathbb{P}^{n - 1}$, 
we have 
\begin{fleqn}[8em] 
\begin{align*}
 & \tau( t, \, (x_1 \, : \, \cdots \, : \, x_n) ) \\
 & \qquad =  (x_1 \, : \, \cdots \, : \, x_n) \; {^t} B(t)  \\
 & \qquad =  (x_1 \, : \, \cdots \, : \, x_n) \; {^t} \, \Exp_{M}(t) \\
 & \qquad =  (x_1 \, : \, \cdots \, : \, x_n) \; \sum_{i \geq 0} \frac{t^i}{i!} \, ({^t}M)^i . 
\end{align*} 
\end{fleqn}
Since
\begin{fleqn}[8em]
\begin{align*}
&  (x_1, \, \ldots ,  \, x_n) \; \sum_{i \geq 0} \frac{t^i}{i!} \, ({^t}M)^i  \\
 & \qquad = 
\sum_{i \geq 0} \frac{t^i}{i!} \,  (x_1, \, \ldots ,  \, x_n) \, ({^t}M)^i  \\
 & \qquad = 
\sum_{i \geq 0} \frac{t^i}{i!} \, \bigl( \,  D^i(x_1), \, D^i(x_2), \, \ldots, \, D^i(x_n)  \, \bigr) \\
 & \qquad = 
\bigl( \, 
\varphi_{D, \, t}(x_1) , \, \ldots, \, \varphi_{D, \, t}(x_n) 
\, \bigr) , 
\end{align*}
\end{fleqn}
and since $\varphi_{D, \, t}(x_1) = x_1$, we have 
\begin{fleqn}[8em]
\begin{align*}
 & (x_1 \, : \, \cdots \, : \, x_n) \; \sum_{i \geq 0} \frac{t^i}{i!} \, ({^t}M)^i  \\
 & \qquad = \bigl( \, \varphi_{D, \, t}(x_1) \, : \, \cdots \, : \, \varphi_{D, \, t}(x_n) \, \bigr) \\ 
 &\qquad = \bigl( \, 
 1 \, : \, 
 \varphi_{D, \, t}(x_2)/ \varphi_{D, \, t}(x_1) 
 \, : \, 
 \cdots 
 \, : \, 
 \varphi_{D, \, t}(x_n)/ \varphi_{D, \, t}(x_1) 
\, \bigr) \\
 & \qquad = 
 \mu_D(t , \; (x_1 : \cdots : x_n)) . 
\end{align*} 
\end{fleqn}
Thus 
\[
 \tau =  \mu_D . 
\]
We know from Lemma 2.1 that 
\[
 B(T) \; \overset{\rm bir}{\sim}  
 \left(
\begin{array}{c c | c }
 1 & 0 &  \multirow{2}{*}{ $O_{2, \, n - 2}$ } \\
 T & 1 & \\
\hline 
 \multicolumn{2}{c |}{ O_{n - 2, \, 2}}  & I_{n - 2} 
\end{array}
\right) .  
\]
Since $A(T)$ is equivalent to $B(T)$, we can obtain 
\[
A(T) 
\; \, \overset{\rm bir}{\sim} \; \, 
\left(
\begin{array}{c c | c }
 1 & T &  \multirow{2}{*}{ $O_{2, \, n - 2}$ } \\
 0 & 1 & \\
\hline 
 \multicolumn{2}{c |}{ O_{n - 2, \, 2}}  & I_{n - 2} 
\end{array}
\right) . 
\]

\section{Proof of Theorem 0.2}

\begin{lem}
Let $A(T)$ and $B(T)$ be exponential matrices of $\Mat(2, k[T])^E$ with the form 
\[
 A(T) 
= 
\left(
\begin{array}{c c}
 1 & \alpha(T) \\
 0 & 1
\end{array}
\right) , 
\qquad 
 B(T) 
= 
\left(
\begin{array}{c c}
 1 & \beta(T) \\
 0 & 1
\end{array}
\right) 
\qquad 
(\, \alpha(T), \beta(T) \in \sfP \backslash \{ \, 0 \, \}  \,) .
\]
Then the following conditions {\rm (1)} and {\rm (2)} are equivalent: 
\begin{enumerate}[label = {\rm (\arabic*)}]
\item The exponential matrices $A(T)$ and $B(T)$ are birationally equivalent. 

\item There exists an element $\lambda$ of $k \backslash \{ 0  \}$ such that $\alpha(T) = \lambda \cdot \beta(T)$. 
\end{enumerate} 
\end{lem}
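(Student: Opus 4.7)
The plan is to reduce birational equivalence on $\mathbb{P}^1$ to ordinary matrix equivalence and then carry out a short linear-algebra computation. The crucial first observation is that when $n = 2$ one has $\mathbb{P}^{n-1} = \mathbb{P}^1$, and every birational self-map of $\mathbb{P}^1$ is automatically an automorphism. Hence the birational map $\sigma$ in the definition of birational equivalence is biregular, and the commuting diagram expresses equivalence of the associated $\G_a$-actions in the sense of definition (f) of Section 1. By Theorem 1.1, this is the same as asking for a matrix $P \in \GL(2, k)$ with $P\, A(T)\, P^{-1} = B(T)$, so from here on the question is purely one of matrix conjugation.

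The implication (2) $\Longrightarrow$ (1) is then produced by taking $P = \diag(1, \lambda)$ when $\alpha(T) = \lambda\, \beta(T)$; a direct computation shows $P\, A(T)\, P^{-1} = B(T)$, so $A(T)$ and $B(T)$ are equivalent, and in particular birationally equivalent by Lemma 2.1(1).

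For (1) $\Longrightarrow$ (2), write $A(T) = I_2 + \alpha(T)\, E$ and $B(T) = I_2 + \beta(T)\, E$ with $E := \bigl(\begin{smallmatrix} 0 & 1 \\ 0 & 0 \end{smallmatrix}\bigr)$. The conjugation relation $P\, A(T)\, P^{-1} = B(T)$ becomes
\[
 \alpha(T) \cdot P E P^{-1} = \beta(T) \cdot E .
\]
Since $\alpha(T)$ is a nonzero polynomial and $P E P^{-1}$ is a constant matrix, the matrix $P E P^{-1}$ must be a scalar multiple of $E$. Writing $P = \bigl(\begin{smallmatrix} a & b \\ c & d \end{smallmatrix}\bigr)$, a short explicit calculation of $P E P^{-1}$ forces $c = 0$, and then $P E P^{-1} = (a/d)\, E$. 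Substituting back yields $\alpha(T) = (d/a)\, \beta(T)$, so $\lambda := d/a \in k \backslash \{0\}$ verifies (2).

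I do not anticipate any serious obstacle: the entire argument rests on the low-dimensional coincidence $\Bir(\mathbb{P}^1) = \Aut(\mathbb{P}^1)$, which collapses birational equivalence to the equivalence already classified by Theorem 1.1, leaving only a routine $2 \times 2$ computation. In the $n \geq 3$ setting the same strategy would fail at the first step, since $\sigma$ could be genuinely birational and one would then have to engage with its indeterminacy locus.
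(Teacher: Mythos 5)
Your proposal is correct and follows essentially the same route as the paper: both arguments hinge on the fact that a birational self-map of $\mathbb{P}^1$ is biregular, hence given by an element of $\PGL(2,k)$, which reduces birational equivalence to matrix conjugation $P\,A(T)\,P^{-1} = B(T)$; the paper then asserts the resulting linear-algebra step without detail, whereas you carry out the $2\times 2$ computation explicitly. The explicit matrices you exhibit in both directions are correct.
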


\begin{proof} 
We first prove the implication (1) $\Longrightarrow$ (2). 
There exists a birational map $\sigma : \mathbb{P}^1_k \dasharrow \mathbb{P}^1_k$ 
such that the following diagram is commutative: 
\[
\xymatrix@R=36pt@C=36pt@M=6pt{
 \G_a \times \mathbb{P}^1 \ar[r]^(.55){A(T)} \ar@{-->}[d]_{1 \times \sigma}
 & \mathbb{P}^1 \ar@{-->}[d]^\sigma \\
\G_a \times \mathbb{P}^1 \ar[r]^(.55){B(T)}
 & \mathbb{P}^1
}
\]
Since $\sigma$ is biregular, there exists a regular matrix $P$ of $\GL(2, k)$ such that 
\[
 \sigma(x_0 \, : \, x_1 ) = (x_0 \, : \, x_1 ) \, {^t}P . 
\]
We have $P \, A(T) = B(T) \, P$. 
This equality implies that condition (2) holds true.

We next prove (2) $\Longrightarrow$ (1). 
Let $\sigma : \mathbb{P}^1_k \dasharrow \mathbb{P}^1_k$ be the birational map 
defined by 
\[
\sigma (x_0 \, : \, x_1) = (x_0 \, : \, \lambda \, x_1) . 
\]
This birational map $\sigma$ gives an birational equivalence between $A(T)$ and $B(T)$. 
\end{proof}

We shall prove Theorem 0.2. 
Let $A(T)$ be an exponential matrix of $\Mat(2, k[T])^E$. 
By \cite[Lemma 1.8]{Tanimoto 2019}, 
there exists a regular matrix $P$ of $\GL(2, k)$ such that $P \, A(T) \, P^{-1}$ has the form 
\[
 P \, A(T) \, P^{-1} 
= 
\left(
\begin{array}{c c}
 1 & \alpha(T) \\
 0 & 1
\end{array}
\right) 
\qquad 
(\, \alpha(T) \in \sfP \,) . 
\]
By Lemma 3.1, we can obtain the desired one-to-one correspondence.

\section{Proof of Theorem 0.3}

We introduce three subsets $\mf{A}_{1, \, 2}^E$, $\mf{A}_{2, \, 1}^E$, $\mf{J}_{[3]}^E$ of 
$\Mat(3, k[T])^E$, as follows:

We denote by $\mf{A}_{1, \, 2}^E$ the set of all exponential matrices $A(T)$ with the form 
\begin{fleqn}[5em] 
\begin{align*}
 A(T) 
= 
\left(
\begin{array}{c c c}
 1 & \alpha_1(T) & \alpha_2(T) \\
 0 & 1 & 0 \\
 0 & 0 & 1
\end{array} 
\right) 
\qquad 
\bigl( \, 
\alpha_1(T) , \alpha_2(T) \in \sfP
\, \bigr) . 
\end{align*}
\end{fleqn}

We denote by $\mf{A}_{2, \, 1}^E$ the set of all exponential matrices $A(T)$ with the form 
\begin{fleqn}[5em] 
\begin{align*}
 A(T) 
= 
\left(
\begin{array}{c c c}
 1 & 0 & \alpha_2(T) \\
 0 & 1 & \alpha_1(T) \\
 0 & 0 & 1
\end{array} 
\right) 
\qquad 
\bigl( \, 
\alpha_1(T) , \alpha_2(T) \in \sfP
\, \bigr) . 
\end{align*}
\end{fleqn}

We denote by $\mf{J}_{[3]}^E$ the set of all exponential matrices $A(T)$ with the form 
\begin{fleqn}[5em] 
\begin{align*}
 A(T) 
= 
\left(
\begin{array}{c c c}
 1 & \alpha_1(T) & \frac{1}{2} \, \alpha_1(T)^2 + \alpha_2(T) \\
 0 & 1 & \alpha_1(T) \\
 0 & 0 & 1
\end{array} 
\right) 
\qquad 
\bigl( \, 
\alpha_1(T) , \alpha_2(T) \in \sfP , \; \alpha_1(T) \ne 0 
\, \bigr) . 
\end{align*}
\end{fleqn} 
The set $\mf{J}_{[3]}^E$ is defined only when $p \geq 3$.

\begin{lem}
The following assertions {\rm (1)} and {\rm (2)} hold true: 
\begin{enumerate}[label = {\rm (\arabic*)}]
\item If $p = 2$, we have 
$\Mat(3, k[T])^E 
  \; \leadsto \; 
 \mf{A}_{1, \, 2}^E \; \cup \; \mf{A}_{2, \, 1}^E
 $. 

\item If $p \geq 3$, we have 
$
 \Mat(3, k[T])^E 
  \; \leadsto \; 
 \mf{A}_{1, \, 2}^E \; \cup \; \mf{A}_{2, \, 1}^E \; \cup \; \mf{J}_{[3]}^E
 $. 
\end{enumerate} 
\end{lem}

\begin{proof} 

See \cite[Theorem 6.1]{Tanimoto 2008}. 
\end{proof}

Let $S$ and $S'$ be non-empty subsets $\Mat(n, k[T])^E$. 
We write
\[
  S \quad \overset{\rm bir}{\leadsto} \quad S'
\]
if for any exponential matrix $A(T)$ of $S$ 
there exists an exponential matrix $B(T)$ of $S'$ such that 
$A(T)$ is birationally equivalent to $B(T)$. 

We write 
\[
  S \quad \leadsto \quad S'
\]
if for any exponential matrix $A(T)$ of $S$ 
there exists an exponential matrix $B(T)$ of $S'$ such that 
$A(T)$ is equivalent to $B(T)$. 
So, if $S \leadsto S'$, then $S \overset{\rm bir}{\leadsto} S'$.

\begin{lem}
For all subsets $S$, $S'$, $S''$, $T$, $T'$ of $\Mat(n, k[T])^E$, 
the following assertions {\rm (1)}, {\rm (2)}, {\rm (3)} hold true: 
\begin{enumerate}[label = {\rm (\arabic*)}]
\item $S \overset{\rm bir}{\leadsto} S$. 

\item If $S \overset{\rm bir}{\leadsto} S'$ and $S' \overset{\rm bir}{\leadsto} S''$, 
then $S \overset{\rm bir}{\leadsto} S''$. 

\item If $S \overset{\rm bir}{\leadsto} S'$ and $T \overset{\rm bir}{\leadsto} T'$, 
then $S \cup T \overset{\rm bir}{\leadsto} S' \cup T'$. 
\end{enumerate} 
\end{lem}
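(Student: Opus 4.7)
The plan is to reduce all three assertions to the fact (stated just after the definition of $\overset{\rm bir}{\sim}$) that $\overset{\rm bir}{\sim}$ is an equivalence relation on $\Mat(n, k[T])^E$. Once this reflexivity, symmetry, and transitivity are in hand, each of (1), (2), (3) unpacks in one or two lines directly from the definition of $\overset{\rm bir}{\leadsto}$.

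For (1), take any $A(T) \in S$ and use $A(T)$ itself as the witness in $S$: since $A(T) \overset{\rm bir}{\sim} A(T)$ by reflexivity of $\overset{\rm bir}{\sim}$, we get $S \overset{\rm bir}{\leadsto} S$. For (2), given $A(T) \in S$, first apply $S \overset{\rm bir}{\leadsto} S'$ to produce $B(T) \in S'$ with $A(T) \overset{\rm bir}{\sim} B(T)$, then apply $S' \overset{\rm bir}{\leadsto} S''$ to that $B(T)$ to produce $C(T) \in S''$ with $B(T) \overset{\rm bir}{\sim} C(T)$, and conclude $A(T) \overset{\rm bir}{\sim} C(T)$ by transitivity of $\overset{\rm bir}{\sim}$; this witnesses $S \overset{\rm bir}{\leadsto} S''$.

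For (3), take any $A(T) \in S \cup T$ and split into cases: if $A(T) \in S$, then $S \overset{\rm bir}{\leadsto} S'$ yields $B(T) \in S' \subseteq S' \cup T'$ with $A(T) \overset{\rm bir}{\sim} B(T)$; symmetrically, if $A(T) \in T$, then $T \overset{\rm bir}{\leadsto} T'$ yields $B(T) \in T' \subseteq S' \cup T'$ with $A(T) \overset{\rm bir}{\sim} B(T)$. In either case the required witness lies in $S' \cup T'$.

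There is no substantial obstacle: the only input beyond unwinding definitions is that $\overset{\rm bir}{\sim}$ is an equivalence relation, which has already been recorded. If one wanted to be fully scrupulous, one could briefly note why $\overset{\rm bir}{\sim}$ is reflexive and transitive (take $\sigma = \id_{\mathbb{P}^{n-1}}$ for reflexivity, and compose the two birational maps $\sigma$ for transitivity, using that birational maps of $\mathbb{P}^{n-1}$ form a group under composition), but these remarks are already implicit in the passage preceding the lemma.
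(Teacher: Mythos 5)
Your proof is correct and matches the paper's intent: the paper simply states ``The proof is straightforward,'' and your unpacking via the reflexivity and transitivity of $\overset{\rm bir}{\sim}$ (recorded just after its definition) together with a case split on $S \cup T$ is exactly the routine argument being elided. Nothing further is needed.
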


\begin{proof}
The proof is straightforward. 
\end{proof}

\subsection{On $\mf{A}_{1, \, 2}^E$, $\mf{A}_{2, \, 1}^E$, $\mf{J}_{[3]}^E$}

We denote by $(\, \mf{A}_{1, \, 2}^E \,)^{\indep} $ the set of all exponential matrices $A(T)$ 
of $\mf{A}_{1, \, 2}^E$ with the form 
\[
 A(T) 
= 
\left(
\begin{array}{c c c}
 1 & \alpha_1(T) & \alpha_2(T) \\
 0 & 1 & 0 \\
 0 & 0 & 1
\end{array} 
\right) 
\qquad 
\left(
\begin{array}{l}
\alpha_1(T) , \alpha_2(T) \in \sfP , \\
\text{$\alpha_1(T)$ and $\alpha_2(T)$ are linearly independent over $k$}
\end{array}
\right) . 
\]

We denote by $\mf{A}_{1, \, 1}^E$ the set of all exponential matrices $A(T)$ with the form 
\[
 A(T) 
= 
\left(
\begin{array}{c c c}
 1 & 0 & \alpha(T) \\
 0 & 1 & 0 \\
 0 & 0 & 1
\end{array} 
\right) 
\qquad 
\bigl( \, 
\alpha(T)  \in \sfP
\, \bigr) . 
\]
We let $(\, \mf{A}_{1, \, 1}^E \,)^\indep := \mf{A}_{1, \, 1}^E \backslash \{\, I_3 \,\}$.

\begin{lem}
The following assertions {\rm (1)}, {\rm (2)}, {\rm (3)} hold true: 
\begin{enumerate}[label = {\rm (\arabic*)}]
\item 
$
\mf{A}_{1, \, 2}^E 
\; \, \leadsto \; \, 
(\, \mf{A}_{1, \, 2}^E \,)^{\indep} \, \sqcup \, (\, \mf{A}_{1, \, 1}^E \,)^\indep \, \sqcup \, \{\, I_3 \,\}
$.

\item $\mf{A}_{2, \, 1}^E  
\; \, \overset{\rm bir}{\leadsto} \; \, 
\mf{A}_{1, \, 1}^E$.

\item $\mf{J}_{[3]}^E \; \, \overset{\rm bir}{\leadsto} \; \, \mf{A}_{2, \, 1}^E$. 
\end{enumerate} 
\end{lem}

\subsubsection{Proof of (1)} 

Let $A(T) \in \mf{A}_{1, \, 2}^E$ and write 
\[
 A(T) 
= 
\left(
\begin{array}{c c c}
 1 & \alpha_1(T) & \alpha_2(T) \\
 0 & 1 & 0 \\
 0 & 0 & 1
\end{array} 
\right) 
\qquad 
\bigl( \, 
\alpha_1(T) , \alpha_2(T) \in \sfP
\, \bigr) . 
\]
Let $W$ be the subspace of $\sfP$ spanned by $\alpha_1(T)$ and $\alpha_2(T)$ over $k$. 
Let $d$ be the dimension of $W$ as a $k$-vector space. 
If $d = 2$, then $A(T) \in (\, \mf{A}_{1, \, 2}^E \,)^\indep$. 
If $d = 1$, then threre exists a regular matrix $P$ of $\GL(2, k)$ such that 
$P A(T) P^{-1} \in (\, \mf{A}_{1, \, 1}^E \,)^\indep$. 
If $d = 0$, then $A(T) = I_3$.

\subsubsection{Proof of (2) }

Let $A(T)$ be a non-zero matrix $A(T) \in \mf{A}_{2, \, 1}^E$ 
and write 
\[
 A(T) 
= 
\left(
\begin{array}{c c c}
 1 & 0 & \alpha_2(T) \\
 0 & 1 & \alpha_1(T) \\
 0 & 0 & 1
\end{array} 
\right) 
\qquad 
\bigl( \, 
\alpha_1(T) , \alpha_2(T) \in \sfP
\, \bigr) . 
\]
We can define the {\it degree} $\deg A(T)$ of 
$A(T)$ as 
\[
 \deg \, A(T) := \max\{\, \deg \alpha_1(T), \; \deg \alpha_2(T) \,\} , 
\]
where $\deg 0 := - \infty$.

\begin{lem}
Let $A(T)$ be an exponential matrix of $\Mat(3, k[T])^E$ with the form 
\[
 A(T) 
= 
\left(
\begin{array}{c c c}
 1 & 0 & \alpha_2(T) \\
 0 & 1 & \alpha_1(T) \\
 0 & 0 & 1
\end{array} 
\right) 
\qquad 
\bigl( \, 
\alpha_1(T) , \alpha_2(T) \in \sfP
\, \bigr) . 
\]
Assume $\alpha_1(T) \ne 0$ and $\alpha_2(T) \ne 0$. 
Then there exists an exponential matrix $B(T)$ of $\mf{A}_{2, \,1}^E$ 
such that $A(T)$ is birationally equivalent to $B(T)$ 
and its form 
\[
 B(T) 
= 
\left(
\begin{array}{c c c}
 1 & 0 & \beta_2(T) \\
 0 & 1 & \beta_1(T) \\
 0 & 0 & 1
\end{array} 
\right) 
\qquad 
\bigl( \, 
\beta_1(T) , \beta_2(T) \in \sfP
\, \bigr) 
\]
satisfies one of the following conditions {\rm (1)}, {\rm (2)}, {\rm (3)}: 
\begin{enumerate}[label = {\rm (\arabic*)}]
\item $\beta_1 \ne 0$, $\beta_2 \ne 0$ and $\deg A(T) > \deg B(T)$. 

\item $\beta_1 \ne 0$, $\beta_2 = 0$. 

\item $\beta_1 = 0$, $\beta_2 \ne 0$. 
\end{enumerate} 
\end{lem}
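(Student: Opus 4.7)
The plan is to realize $A(T) \in \mf{A}_{2, 1}^E$ as a $\G_a$-action on $\mathbb{P}^2$ via Theorem 1.1 and then produce $B(T)$ by conjugating this action with a suitable birational shear of $\mathbb{P}^2$ whose coefficients lie in $\sfP$. The key observation is that nonzero $p$-polynomials are additive in characteristic $p$, i.e.\ $g(v + w) = g(v) + g(w)$, which is exactly what makes such a conjugation preserve the translation form of the action.

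First I would compute, on the affine chart $\{x_2 \ne 0\}$ of $\mathbb{P}^2$ with coordinates $u := x_0/x_2$ and $v := x_1/x_2$, that $(x_0, x_1, x_2) \cdot {}^tA(t)$ gives the translation
\[
(u, v) \; \longmapsto \; \bigl(\, u + \alpha_2(t),\; v + \alpha_1(t) \,\bigr),
\]
and note that every matrix of $\mf{A}_{2, 1}^E$ yields such a translation action. Then, for an arbitrary $g \in \sfP$, I would consider the biregular shear $\sigma_g(u, v) := (u - g(v), v)$ of $\mathbb{A}^2$, extended to a birational self-map of $\mathbb{P}^2$. Using the additivity identity $g(v + \alpha_1(t)) = g(v) + g(\alpha_1(t))$, a short calculation shows that $\sigma_g$ conjugates the $A(T)$-action into the translation
\[
(u', v') \; \longmapsto \; \bigl(\, u' + \alpha_2(t) - g(\alpha_1(t)),\; v' + \alpha_1(t) \,\bigr).
\]
Since $g(\alpha_1)$ is a composition of $p$-polynomials, hence itself a $p$-polynomial, this new action corresponds to some $B(T) \in \mf{A}_{2, 1}^E$ with $\beta_1 = \alpha_1$ and $\beta_2 = \alpha_2 - g(\alpha_1)$, and $A(T)$ is birationally equivalent to $B(T)$. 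Symmetrically, the shear $\tau_h(u, v) := (u, v - h(u))$ for $h \in \sfP$ produces an equivalent matrix with $\beta_1 = \alpha_1 - h(\alpha_2)$ and $\beta_2 = \alpha_2$.

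Next I would use this to lower degrees. Every nonzero $p$-polynomial has degree a power of $p$, so write $\deg \alpha_1 = p^m$ and $\deg \alpha_2 = p^n$. When $n \ge m$, choosing $g(v) := e\, v^{p^{n-m}}$ with $e \in k$ tuned so that $g(\alpha_1)$ and $\alpha_2$ share leading terms (possible since $k$ is algebraically closed) forces $\deg \beta_2 < p^n$. If $n > m$, this single $\sigma_g$-step produces $\deg B < \deg A$, landing in case (1) if $\beta_2 \ne 0$ and in case (2) if $\beta_2 = 0$. The case $n < m$ is handled symmetrically via $\tau_h$ and lands in case (1) or case (3). In the remaining case $n = m$, one $\sigma_g$-step only kills the leading term of $\alpha_2$ while $\deg \beta_1 = p^n$ stays fixed, so $\deg B = \deg A$; however $\deg \beta_2 < p^n = \deg \beta_1$ after this step, so applying a second shear $\tau_h$ to $(\beta_1, \beta_2)$ with $h$ chosen to cancel the leading term of $\beta_1$ via $\beta_2$ strictly lowers $\deg \beta_1$, and the composite birational map lands in case (1) or case (3).

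The main obstacle is the conjugation computation: the translation form must survive the birational change of coordinates, and this is exactly where the additivity of $p$-polynomials is crucial. For an arbitrary polynomial $g$ the $v$-dependence in $g(v + \alpha_1(t)) - g(v)$ would not cancel, and the conjugated action would no longer be a translation, hence would no longer correspond to an element of $\mf{A}_{2, 1}^E$. Once that computation is secured, the remainder is a finite degree bookkeeping on $p$-polynomials.
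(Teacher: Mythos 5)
Your proposal is correct and is essentially the paper's own argument: the paper's key birational map $\sigma(x_0:x_1:x_2) = (x_0/x_2 - \lambda(x_1/x_2) : x_1/x_2 : 1)$ with $\lambda \in \sfP$ chosen to cancel the leading term is exactly your shear $\sigma_g$ written in homogeneous coordinates, with the same reliance on additivity of $p$-polynomials and on $g \circ \alpha_1 \in \sfP$. Your explicit two-step treatment of the equal-degree case (where a single shear leaves $\deg B = \deg A$) fills in a detail the paper dismisses as ``similar,'' which is a welcome bit of extra care.
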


\begin{proof}
Write 
\begin{align*}
\begin{array}{r @{\,} l @{\qquad} l }
 \alpha_1(T) 
& = \ds \sum_{i = 0}^e c_i \, T^{p^i}  & (\, c_0, \ldots, c_e \in k , \quad c_e \ne 0 \,) , \\
 \alpha_2(T) 
& = \ds \sum_{i = 0}^f d_i \, T^{p^i}  & (\, d_0, \ldots, d_f \in k , \quad d_f \ne 0 \,) . 
\end{array}
\end{align*}
One of the following cases (i), (ii), (iii) can occur: 
\begin{enumerate}[label = {\rm (\roman*)}]
\item $\deg \alpha_1(T) < \deg \alpha_2(T)$. 

\item $\deg \alpha_1(T) = \deg \alpha_2(T)$. 

\item $\deg \alpha_1(T) > \deg \alpha_2(T)$. 
\end{enumerate} 
\medskip 

\noindent 
{\bf Case (i).}  We have $f > e$. 
Let $\lambda(T) := ( d_f / c_e^{p^{f - e}} ) \, T^{p^{f - e}} $. 
Let $\beta_1(T) := \alpha_1(T)$ and 
$\beta_2(T) := \alpha_2(T) - \lambda(T) \circ \alpha_1(T)$. 
Clearly, $\lambda(T), \beta_1(T), \beta_2(T) \in \sfP$. 
We can show $\deg \beta_2(T) < \deg \alpha_2(T)$. 
Let $\sigma : \mathbb{P}^2_k \dasharrow \mathbb{P}^2_k$ be the birational 
map defined by 
\[
 \sigma(x_0 \, : \, x_1 \, : \, x_2) 
 := 
\left( \, 
\frac{x_0}{x_2} - \lambda\left( \frac{x_1}{x_2} \right)
 \, : \,  
\frac{x_1}{x_2} 
 \, : \,  
 1 
\, \right) . 
\]
The following diagram is commutative; 
\[
\xymatrix@R=36pt@C=36pt@M=6pt{
 \G_a \times \mathbb{P}^2_k \ar[r]^(.6){A(T)} \ar@{-->}[d]_{1 \times \sigma} 
 & \mathbb{P}^2_k \ar@{-->}[d]^\sigma \\
 \G_a \times \mathbb{P}^2_k \ar[r]_(.6){B(T)}
 & \mathbb{P}^2_k 
}
\]
If $\beta_2(T) \ne 0$, then $B(T)$ satisfies condition (1). 
If $\beta_2(T) = 0$, then $B(T)$ satisfies condition (2). 
\medskip

\noindent
{\bf Cases (ii) and (iii).} The proofs are similar to the proof of Case (i). 
So, we omit the proofs.

\end{proof}

We shall give the proof of assertion (2) of Lemma 4.3. 
Let $A(T)$ be an exponential matrix of $\Mat(3, k[T])^E$ with the form 
\[
 A(T) 
= 
\left(
\begin{array}{c c c}
 1 & 0 & \alpha_2(T) \\
 0 & 1 & \alpha_1(T) \\
 0 & 0 & 1
\end{array} 
\right) 
\qquad 
\bigl( \, 
\alpha_1(T) , \alpha_2(T) \in \sfP
\, \bigr) . 
\]
Assume $\alpha_1(T) \ne 0$ and $\alpha_2(T) \ne 0$. 
Then $A(T)$ is birationally equivalent to an exponential matrix $B(T)$ with the form 
\[
 B(T) 
= 
\left(
\begin{array}{c c c}
 1 & 0 & \beta(T) \\
 0 & 1 & 0 \\
 0 & 0 & 1
\end{array} 
\right) 
\qquad 
\bigl( \, 
\beta(T) \in \sfP \backslash \{ \, 0 \, \} 
\, \bigr) . 
\]
Thus we can obtain  
\[
\mf{A}_{2, \, 1}^E
\;\,  \overset{\rm bir}{\leadsto} \;\, 
\mf{A}_{1, \, 1}^E . 
\]

\subsubsection{Proof of (3)}

We shall prove assertion (3) of Lemma 4.3. 
Let $A(T)$ be an exponential matrix of $\mf{J}_{[3]}^E$ and write 
\[
 A(T) 
= 
\left(
\begin{array}{c c c}
 1 & \alpha_1(T) & \frac{1}{2} \, \alpha_1(T)^2 + \alpha_2(T) \\
 0 & 1 & \alpha_1(T) \\
 0 & 0 & 1
\end{array} 
\right) 
\qquad 
\bigl( \, 
\alpha_1(T) , \alpha_2(T) \in \sfP , \; \alpha_1(T) \ne 0 
\, \bigr) . 
\]
Consider the birational map $\sigma : \mathbb{P}^2_k \dasharrow \mathbb{P}^2_k$ defined by 
\begin{align*}
\sigma(x_0 \, : \, x_1 \, : \, x_2)
 : = 
\left(\,
x_0 \, x_2 - \frac{1}{2} \, x_1^2 
 \; : \; 
x_1 \, x_2 
 \; : \; 
x_2^2 
\, \right) . 
\end{align*}
This $\sigma$ gives a birational equivalence between $A(T)$ and $B(T)$, 
where 
\[
 B(T) 
:= 
\left(
\begin{array}{c c c }
 1 & 0 & \alpha_2(T) \\
 0 & 1 & \alpha_1(T) \\
 0 & 0 & 1 
\end{array}
\right) . 
\]
Thus we have 
\[
 \mf{J}_{[3]}^E \;\, \overset{\rm bir}{\leadsto} \;\, \mf{A}_{2, \, 1}^E . 
\]

\subsection{Birational equivalence of exponential matrices of 
$(\, \mf{A}_{1, \, 1}^E \,)^\indep$ and $(\, \mf{A}_{1, \, 2}^E \,)^\indep$}

\subsubsection{Birational equivalence of exponential matrices of 
$(\, \mf{A}_{1, \, 1}^E \,)^\indep$}

\begin{lem}
\label{A} 
Let $A(T)$ and $B(T)$ be exponential matrices of $(\, \mf{A}_{1, \, 1}^E \,)^\indep$. 
Write 
\begin{align*}
\begin{array}{r @{\,} l @{\qquad} l}
A(T) 
 & = 
\left(
\begin{array}{c c c}
 1 & 0 & \alpha(T) \\
 0 & 1 & 0  \\
 0 & 0 & 1 
\end{array}
\right) 
& \bigl( \, \alpha(T) \in \sfP \backslash \{\, 0 \, \} \, \bigr) , \\ [2.5em]
B(T) 
 & = 
\left(
\begin{array}{c c c}
 1 & 0 & \beta(T) \\
 0 & 1 & 0  \\
 0 & 0 & 1 
\end{array}
\right) 
& \bigl( \, \beta(T) \in \sfP \backslash \{\, 0 \, \} \, \bigr) . 
\end{array} 
\end{align*}
Then the following conditions {\rm (1)} and {\rm (2)} are equivalent: 
\begin{enumerate}[label = {\rm (\arabic*)}]
\item The exponential matrices $A(T)$ and $B(T)$ are birationally equivalent, i.e., 
$A(T) \overset{\rm bir}{\sim} B(T)$. 

\item There exists an element $\lambda$ of $k \backslash \{ \, 0 \, \}$ 
such that $\alpha(T) = \lambda \, \beta(T)$. 
\end{enumerate} 
\end{lem}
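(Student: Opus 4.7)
For $(2)\Rightarrow(1)$, I will exhibit the biregular automorphism $\sigma\in\Aut(\mathbb{P}^2)$ given by $\sigma(x_0:x_1:x_2)=(x_0:x_1:\lambda x_2)$ and verify directly that it intertwines $\mu_A$ and $\mu_B$; this will in fact give equivalence, hence a fortiori birational equivalence, of $A(T)$ and $B(T)$.

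For $(1)\Rightarrow(2)$, my plan is to work in affine coordinates $u=x_0/x_2$, $v=x_1/x_2$ on $\{x_2\neq 0\}$. The $\G_a$-actions corresponding to $A(T)$ and $B(T)$ are translations in $u$, with coactions
\begin{align*}
\mu_A^*(u)=u+\alpha(T),\quad \mu_A^*(v)=v,\qquad \mu_B^*(u)=u+\beta(T),\quad \mu_B^*(v)=v.
\end{align*}
A birational $\sigma$ realizing the equivalence induces a $k$-algebra automorphism $\sigma^*$ of $k(u,v)$; setting $g:=\sigma^*(u)$ and $h:=\sigma^*(v)$, the commutativity of the square will translate into the two identities
\begin{align*}
g(u+\alpha(T),v)=g(u,v)+\beta(T),\qquad h(u+\alpha(T),v)=h(u,v)
\end{align*}
in $k(u,v)(T)$. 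All remaining work is a structural analysis of these.

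I will first observe that, because $\alpha\in\sfP\setminus\{0\}$ is a nonzero $p$-polynomial and $k$ is algebraically closed, $\alpha:k\to k$ is surjective; hence the $\G_a$-invariant subfield of $k(u,v)$ for the $A$-action is precisely $k(v)$. The second identity will then force $h\in k(v)$, and running the symmetric argument on $\sigma^{-1}$ shows $\sigma^*$ restricts to a $k$-algebra automorphism of $k(v)$, so $h$ is a linear fractional transformation in $v$ with $k(h)=k(v)$. For $g$, I will write $g=P/Q$ with $P,Q\in k(v)[u]$ coprime; clearing denominators in the first identity and invoking that $u\mapsto u+\alpha(T)$ is an automorphism of $k(v)[u,T]$ preserving coprimality, I will obtain $Q(u+\alpha(T),v)\mid Q(u,v)$ in $k(v)[u,T]$, and comparing leading $u$-coefficients will force equality. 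Specializing $T$ and again using surjectivity of $\alpha$ on $k$, a shift-invariance argument will then show $Q$ is independent of $u$, so $g\in k(v)[u]$. The birationality of $\sigma$ gives $k(g,h)=k(u,v)$; combined with $k(h)=k(v)$, this yields $k(v)(g)=k(v)(u)$, forcing $\deg_u g=1$, say $g=A(v)u+B(v)$ with $A(v)\in k(v)^{\times}$. Substituting into the first identity will produce $A(v)\alpha(T)=\beta(T)$ inside $k(v,T)$; since $k(v)\cap k(T)=k$, I conclude $A(v)=\lambda\in k\setminus\{0\}$ and $\beta=\lambda\alpha$.

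The hard part will be the middle structural reduction: extracting from a general birational $\sigma$ that $Q$ is $u$-independent and that $P$ has $u$-degree $1$. The former requires the coprimality/substitution analysis coupled with the surjectivity of $\alpha$, while the latter genuinely relies on $\sigma^*$ being an \emph{automorphism} (not merely an endomorphism) of $k(u,v)$ — without this, there would be no bound on $\deg_u g$.
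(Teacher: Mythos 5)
Your proposal is correct, but it takes a genuinely different route from the paper's. The paper stays in homogeneous coordinates: it writes $\sigma = (f_0 : f_1 : f_2)$ with the $f_i$ homogeneous of the same degree, uses the intertwining identities to show $f_1, f_2 \in k[x_1, x_2]$, expands $f_0 = \sum_i \xi_i(x_1, x_2)\, x_0^i$, and then compares the highest powers of $T$ on the two sides of the resulting identity (running the same comparison for $\sigma^{-1}$ to obtain $\deg_T \alpha = \deg_T \beta$); matching $p$-power exponents of $T$ then forces $\deg_{x_0} f_0 = 1$ and yields $\alpha \cdot \xi_1 \cdot x_2 = \beta \, f_2$, hence $\alpha = \lambda\,\beta$. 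You instead pass to the affine chart and argue field-theoretically: surjectivity of the nonzero $p$-polynomial $\alpha$ on the algebraically closed field $k$ identifies the invariant subfield with $k(v)$, a coprimality argument in $k(v)(T)[u]$ shows $g := \sigma^*(u)$ lies in $k(v)[u]$, and $k(v)(g) = k(v)(u)$ (from birationality) forces $\deg_u g = 1$ --- this degree bound plays exactly the role of the paper's conclusion $n = 1$, but is obtained from a field-degree count rather than from comparing $T$-degrees. Both arguments invoke $\sigma^{-1}$ symmetrically at one point (you to control $\sigma^*(v)$, the paper to get the reverse inequality $e \geq f$). Your version is more conceptual and makes explicit where birationality, as opposed to a mere dominant self-map, is used; the paper's is more elementary in that it needs only polynomial identities and no L\"uroth-type input. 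One small streamlining: for the final step you only need $h := \sigma^*(v) \in k(v)$, not the stronger statement $k(h) = k(v)$, since $k(u, v) = k(g, h) \subseteq k(v)(g)$ already gives $k(v)(g) = k(v)(u)$.
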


\begin{proof}

We first prove the implication (1) $\Longrightarrow$ (2). 
There exists a birational map $\sigma : \mathbb{P}^2_k \dasharrow \mathbb{P}^2_k$ 
such that the following diagram is commutative: 
\[
\xymatrix@R=36pt@C=36pt@M=6pt{
 \G_a \times \mathbb{P}^2_k \ar[r]^(.6){A(T)} \ar@{-->}[d]_{1 \times \sigma} 
 & \mathbb{P}^2_k \ar@{-->}[d]^\sigma \\
 \G_a \times \mathbb{P}^2_k \ar[r]_(.6){B(T)}
 & \mathbb{P}^2_k 
}
\]
Write 
\[
 \sigma(x_0 \, : \, x_1 \, : \, x_2 ) 
 = 
(f_0 \, : \, f_1 \, : \, f_2) , 
\]
where $f_0, f_1, f_2 \in k[x_0, x_1, x_2]$ are homogenous polynomials of the same degree. 
Thus 
\begin{align*}
& 
\bigl( \, 
f_0(x_0 + \alpha \, x_2 , \, x_1, \, x_2) 
\, : \, 
f_1(x_0 + \alpha \, x_2 , \, x_1, \, x_2) 
\, : \, 
f_2(x_0 + \alpha \, x_2 , \, x_1, \, x_2) 
\, \bigr) \\
& \qquad 
 = 
\bigl( \, 
f_0(x_0, x_1, x_2) + \beta \, f_2(x_0, x_1, x_2) 
\, : \, 
 f_1(x_0, x_1, x_2) 
\, : \, 
 f_2(x_0, x_1, x_2) 
\, \bigr)  . 
\end{align*} 
For all $(x_0 \, : \, x_1 \, : \, x_2) \in \mathbb{P}^2_k$, 
there exists an element $\mu$ of $k \backslash \{ \, 0 \,\}$ such that 
\begin{align*}
& 
\bigl( \, 
f_0(x_0 + \alpha \, x_2 , \, x_1, \, x_2) , \; \;
f_1(x_0 + \alpha \, x_2 , \, x_1, \, x_2) , \; \;
f_2(x_0 + \alpha \, x_2 , \, x_1, \, x_2) 
\, \bigr) \\
& \qquad 
 = 
\mu \,
\bigl( \, 
f_0(x_0, x_1, x_2) + \beta \, f_2(x_0, x_1, x_2) , \; \;
 f_1(x_0, x_1, x_2) , \; \; 
 f_2(x_0, x_1, x_2) 
\, \bigr)  . 
\end{align*} 
Letting $T = 0$, we have $\mu = 1$. 
Thus 
\begin{align}
\tag{\ref{A}.1}
\left\{
\begin{array}{r @{\,} l}
f_0(x_0 + \alpha \, x_2 , \, x_1, \, x_2) 
 & = f_0(x_0, x_1, x_2) + \beta \, f_2(x_0, x_1, x_2) , \\
f_1(x_0 + \alpha \, x_2 , \, x_1, \, x_2) 
 & = f_1(x_0, x_1, x_2) , \\
f_2(x_0 + \alpha \, x_2 , \, x_1, \, x_2) 
 & = f_2(x_0, x_1, x_2) . 
\end{array}
\right. 
\end{align}
From the second and third equalities of $(\ref{A}.1)$, 
we can obtain $f_1(x_0, x_1, x_2) , f_2(x_0, x_1, x_2) \in k[x_1, x_2]$. 
Write 
\begin{align*}
 f_0(x_0, x_1, x_2) = \sum_{i = 0}^n \xi_i(x_1, x_2) \, x_0^i , 
\qquad 
\xi_i (x_1, x_2) \in k[x_1, x_2] \quad ( \, 0 \leq i \leq n \, ) , 
\qquad 
\xi_n(x_1, x_2) \ne 0 .
\end{align*}
So, we have 
\begin{align*}
& f_0(x_0 + \alpha \, x_2 , \, x_1, \, x_2) 
 = f_0(x_0, x_1, x_2) 
 + \sum_{j = 0}^{n - 1} \left(\, \sum_{i = j + 1}^n \binom{i}{j} \, \alpha^{i - j} \cdot \xi_i(x_1, x_2) \, x_2^{i - j} \, \right)  x_0^j . 
\end{align*}
Now the first equality of $(\ref{A}.1)$ implies 
\[
 \sum_{j = 0}^{n - 1} \left(\, \sum_{i = j + 1}^n \binom{i}{j} \,  \alpha^{i - j} \cdot \xi_i(x_1, x_2) \, x_2^{i - j} \, \right)  x_0^j 
 = 
\beta \, f_2 . 
\]
Since $f_2 \in k[x_1, x_2]$, we have 
\begin{align}
\tag{\ref{A}.2}
\left\{ 
\begin{array}{l}
\ds  \sum_{i = 1}^n \alpha^i \cdot \xi_i(x_1, x_2) \, x_2^i = \beta \, f_2 , \\
\ds \sum_{i = j + 1}^n \binom{i}{j} \, \alpha^{i - j} \cdot \xi_i(x_1, x_2) \, x_2^{i - j} = 0 
\qquad \text{ for all \; $1 \leq  j \leq n - 1 $}  . 
\end{array}
\right. 
\end{align}
Consider the highest term in $T$ appearing in both sides of the first equality of (\ref{A}.1). 
So, write 
\begin{align*}
\left\{ 
\begin{array}{r @{\,} l @{\qquad} l @{\qquad} l }
 \alpha & = \ds \sum_{\imath = 0}^e c_\imath \, T^{p^\imath} , 
 & c_\imath \in k \quad (0 \leq \imath \leq e) , 
 & c_e \ne 0 , \\ [1.5em] 
 \beta & = \ds \sum_{\jmath = 0}^f d_\jmath \, T^{p^\jmath} , 
 & d_\jmath \in k \quad (\, 0 \leq \jmath \leq f \,) , 
 & d_f \ne 0 . 
\end{array} 
\right. 
\end{align*}
We have 
\[
 (c_e \, T^{p^e})^n \cdot  \xi_n(x_1, x_2) \, x_2^n 
 = d_f \, T^{p^f} \, f_2 , 
\]
and thereby have $f \geq e$. 
Similarly, consider the birational map $\sigma^{-1}$, which gives an birational equivalence 
between the exponential matrices $B(T)$ and $A(T)$. We also have $e \geq f$. 
Thus
\[
 \deg_T \alpha = \deg_T \beta . 
\]
Now the first equality of $(\ref{A}.2)$ implies $n = 1$ and 
\begin{align}
\tag{\ref{A}.3}
\alpha \cdot  \xi_1(x_1, x_2) \,  x_2 = \beta \, f_2 . 
\end{align}
Thus $\alpha = \lambda \, \beta$ for some $\lambda \in k$ (since $\beta(T) \ne 0$ and 
$\sigma$ is birational).

We next prove the implication (2) $\Longrightarrow$ (1). 
Let $\sigma : \mathbb{P}^2_k \dasharrow \mathbb{P}^2_k$ be the birational map 
defined by 
\[
 \sigma(x_0 \, : \, x_1 \, : \, x_2) 
 = 
(x_0 \, : \, x_1 \, : \, \lambda \, x_2) . 
\]
This $\sigma$ gives an birational equivalence between the exponential matrices 
$A(T)$ and $B(T)$.

\end{proof}

\subsubsection{Birational equivalence of exponential matrices of 
$(\, \mf{A}_{1, \, 2}^E \,)^\indep$}

\begin{lem}
\label{B}
Let $A(T)$ and $B(T)$ be exponential matrices of $(\, \mf{A}_{1, \, 2}^E \,)^\indep$. 
Write 
\begin{align*}
\begin{array}{r @{\,} l @{\qquad} l}
 A(T) 
& = 
\left(
\begin{array}{c c c}
 1 & \alpha_1(T) & \alpha_2(T) \\
 0 & 1 & 0 \\
 0 & 0 & 1
\end{array} 
\right) 
&
\left(
\begin{array}{l}
\alpha_1(T) , \alpha_2(T) \in \sfP , \\
\text{$\alpha_1(T)$ and $\alpha_2(T)$ are linearly independent over $k$}
\end{array}
\right) , \\ [2.5em] 
 B(T) 
& = 
\left(
\begin{array}{c c c}
 1 & \beta_1(T) & \beta_2(T) \\
 0 & 1 & 0 \\
 0 & 0 & 1
\end{array} 
\right) 
&
\left(
\begin{array}{l}
\beta_1(T) , \beta_2(T) \in \sfP , \\
\text{$\beta_1(T)$ and $\beta_2(T)$ are linearly independent over $k$}
\end{array}
\right) . 
\end{array} 
\end{align*}
Then the following conditions {\rm (1)} and {\rm (2)} are equivalent: 
\begin{enumerate}[label = {\rm (\arabic*)}]
\item The exponential matrices $A(T)$ and $B(T)$ are birationally equivalent, i.e., 
$A(T) \overset{\rm bir}{\sim} B(T)$. 

\item There exists a regular matrix $Q$ of $\GL(2, k)$ such that 
\[
\bigl(\, 
 \alpha_1(T) \; \; \alpha_2(T) 
\, \bigr)
 = 
\bigl(\, 
 \beta_1(T) \; \; \beta_2(T) 
\, \bigr)
\; 
Q . 
\]
\end{enumerate} 
\end{lem}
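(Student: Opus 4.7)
The plan is to argue the two implications separately, following the template of Lemma~5.8. For the direction (2) $\Longrightarrow$ (1), given $Q = (q_{i,j}) \in \GL(2, k)$ with $(\alpha_1(T), \alpha_2(T)) = (\beta_1(T), \beta_2(T))\, Q$, I will define the biregular automorphism
\[
\sigma(x_0 : x_1 : x_2) := (x_0 : q_{1,1} x_1 + q_{2,1} x_2 : q_{1,2} x_1 + q_{2,2} x_2)
\]
and verify by direct substitution, using the identity $\alpha_1 x_1 + \alpha_2 x_2 = \beta_1(q_{1,1} x_1 + q_{2,1} x_2) + \beta_2(q_{1,2} x_1 + q_{2,2} x_2)$, that $\sigma$ intertwines the $\G_a$-actions $\mu_A$ and $\mu_B$.

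For the direction (1) $\Longrightarrow$ (2), let $\sigma : \mathbb{P}^2_k \dashrightarrow \mathbb{P}^2_k$ realize the equivalence, and write $\sigma = (f_0 : f_1 : f_2)$ with $f_0, f_1, f_2 \in k[x_0, x_1, x_2]$ homogeneous of a common degree. Translating the diagram into polynomial equalities and specializing $T = 0$ (which pins the projective scalar to $1$, as in Lemma~5.8) yields the system
\[
\left\{
\begin{array}{r @{\,} l}
f_0(x_0 + L,\, x_1,\, x_2) & = f_0 + \beta_1(T)\, f_1 + \beta_2(T)\, f_2 , \\
f_1(x_0 + L,\, x_1,\, x_2) & = f_1 , \\
f_2(x_0 + L,\, x_1,\, x_2) & = f_2 ,
\end{array}
\right.
\]
where $L := \alpha_1(T) x_1 + \alpha_2(T) x_2$. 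Since $L \ne 0$ has positive $T$-degree, the last two equations force $f_1, f_2 \in k[x_1, x_2]$. Writing $f_0 = \sum_{i=0}^n \xi_i(x_1, x_2)\, x_0^i$ with $\xi_n \ne 0$ and extracting the coefficient of each $x_0^j$ from the first equation gives $\sum_{i=1}^n L^i \xi_i = \beta_1 f_1 + \beta_2 f_2$ together with $\sum_{i = j+1}^n \binom{i}{j} L^{i-j} \xi_i = 0$ for $1 \leq j \leq n - 1$.

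The main obstacle is to show $n = 1$. From the identity at $j = n - 1$ I get $n\, L\, \xi_n = 0$, so $p \mid n$ whenever $n \geq 2$. Separately, a top-$T$-degree comparison in $\sum_{i=1}^n L^i \xi_i = \beta_1 f_1 + \beta_2 f_2$, using that the leading $T$-coefficient of $L$ is a nonzero element of $k[x_1, x_2]$ and that both $\deg_T L = p^e$ and $\deg_T(\beta_1 f_1 + \beta_2 f_2) = p^{e'}$ are powers of $p$ (since $\alpha_j, \beta_j \in \sfP$), yields $n\, p^e = p^{e'}$. Repeating the analysis with the inverse birational map $\sigma^{-1}$, which exchanges the roles of $A(T)$ and $B(T)$, produces the symmetric relation with $e$ and $e'$ swapped; combining these two exponent relations forces $e = e'$ and hence $n = 1$.

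Once $n = 1$ is established, the key identity simplifies to $\xi_1(x_1, x_2)\,(\alpha_1 x_1 + \alpha_2 x_2) = \beta_1 f_1 + \beta_2 f_2$. For any $(a, b) \in k^2$ with $\xi_1(a, b) \ne 0$, specializing $(x_1, x_2) = (a, b)$ shows that $a\, \alpha_1 + b\, \alpha_2 \in \Span_k\{\beta_1, \beta_2\}$. Since $\xi_1$ is a nonzero polynomial, it is nonvanishing on a Zariski dense open subset of $k^2$, so I can pick two linearly independent pairs $(a_i, b_i)$ on which $\xi_1 \ne 0$; this gives $\alpha_1, \alpha_2 \in \Span_k\{\beta_1, \beta_2\}$, and the linear independence hypotheses on both $(\alpha_1, \alpha_2)$ and $(\beta_1, \beta_2)$ then produce the required $Q \in \GL(2, k)$.
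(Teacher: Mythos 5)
Your argument is correct and follows essentially the same route as the paper's proof: the same reduction to the polynomial system for $f_0, f_1, f_2$, the same device of forcing $n = 1$ by comparing top $T$-degrees for both $\sigma$ and $\sigma^{-1}$, and an equivalent extraction of $Q$ at the end (the paper first normalizes so that $\deg_T \alpha_1 < \deg_T \alpha_2$ and $\deg_T \beta_1 < \deg_T \beta_2$ and then compares $T^{p^e}$-coefficients, whereas you specialize $(x_1, x_2)$ at points where $\xi_1 \ne 0$). The only points needing a touch more care are that, since you skip the paper's degree normalization, you should justify why the leading $T$-terms of $\beta_1 f_1 + \beta_2 f_2$ cannot cancel when $\deg_T \beta_1 = \deg_T \beta_2$ (such cancellation would make $f_1$ and $f_2$ proportional, contradicting birationality of $\sigma$; alternatively the inequality $\deg_T(\beta_1 f_1 + \beta_2 f_2) \le \max_j \deg_T \beta_j$ together with its mirror for $\sigma^{-1}$ already yields $n n' \le 1$), and that the indices of $Q$ in your explicit $\sigma$ for (2) $\Rightarrow$ (1) are transposed relative to the convention $(\alpha_1 \;\, \alpha_2) = (\beta_1 \;\, \beta_2)\, Q$.
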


\begin{proof}

We first prove the implication (1) $\Longrightarrow$ (2). 
We consider the case 
\[
\deg_T \alpha_1(T) < \deg_T \alpha_2(T) , \qquad 
\deg_T \beta_1(T) < \deg_T \beta_2(T) . 
\] 
The other cases can be reduced to the above case. 
There exists a birational map $\sigma : \mathbb{P}^2_k \dasharrow \mathbb{P}^2_k$ 
such that the following diagram is commutative: 
\[
\xymatrix@R=36pt@C=36pt@M=6pt{
 \G_a \times \mathbb{P}^2_k \ar[r]^(.6){A(T)} \ar@{-->}[d]_{1 \times \sigma} 
 & \mathbb{P}^2_k \ar@{-->}[d]^\sigma \\
 \G_a \times \mathbb{P}^2_k \ar[r]_(.6){B(T)}
 & \mathbb{P}^2_k 
}
\]
Write 
\[
 \sigma(x_0 \, : \, x_1 \, : \, x_2 ) 
 = 
(f_0 \, : \, f_1 \, : \, f_2) , 
\]
where $f_0, f_1, f_2 \in k[x_0, x_1, x_2]$ are homogenous polynomials of the same degree. 
Thus 
\begin{align*}
& 
\bigl( \, 
f_0(x_0 + \alpha_1 \, x_1 + \alpha_2 \, x_2 , \, x_1, \, x_2) 
\, : \, 
f_1(x_0 + \alpha_1 \, x_1 + \alpha_2 \, x_2 , \, x_1, \, x_2) 
\, : \, 
f_2(x_0 + \alpha_1 \, x_1 + \alpha_2 \, x_2 , \, x_1, \, x_2) 
\, \bigr) \\
& \qquad 
 = 
\bigl( \, 
f_0(x_0, x_1, x_2) + \beta_1 \, f_1(x_0, x_1, x_2) +  \beta_2 \, f_2(x_0, x_1, x_2) 
\, : \, 
 f_1(x_0, x_1, x_2) 
\, : \, 
 f_2(x_0, x_1, x_2) 
\, \bigr)  . 
\end{align*} 
Thus 
\begin{align}
\tag{\ref{B}.1}
\left\{
\begin{array}{r @{\,} l}
f_0(x_0 + \alpha_1 \, x_1 + \alpha_2 \, x_2 , \, x_1, \, x_2) 
 & = f_0(x_0, x_1, x_2) + \beta_1 \, f_1(x_0, x_1, x_2) +  \beta_2 \, f_2(x_0, x_1, x_2)  , \\
f_1(x_0 + \alpha_1 \, x_1 + \alpha_2 \, x_2 , \, x_1, \, x_2) 
 & = f_1(x_0, x_1, x_2) , \\
f_2(x_0 + \alpha_1 \, x_1 + \alpha_2 \, x_2 , \, x_1, \, x_2) 
 & = f_2(x_0, x_1, x_2) . 
\end{array}
\right. 
\end{align}
From the second and third equalities of $(\ref{B}.1)$, 
we can obtain $f_1(x_0, x_1, x_2) , f_2(x_0, x_1, x_2) \in k[x_1, x_2]$. 
Write 
\begin{align*}
 f_0(x_0, x_1, x_2) = \sum_{i = 0}^n \xi_i(x_1, x_2) \, x_0^i , 
\qquad 
\xi_i (x_1, x_2) \in k[x_1, x_2] \quad ( \, 0 \leq i \leq n \, ) , 
\qquad 
\xi_n(x_1, x_2) \ne 0 .  
\end{align*}
So, we have 
\begin{align*}
& f_0(x_0 + \alpha_1 \, x_1 + \alpha_2 \, x_2 , \, x_1, \, x_2) \\
& \qquad = f_0(x_0, x_1, x_2) 
 + \sum_{j = 0}^{n - 1} \left(\, \sum_{i = j + 1}^n 
\binom{i}{j} \, \xi_i(x_1, x_2) \, (\alpha_1 \, x_1 + \alpha_2 \, x_2)^{i - j} \, \right)  x_0^j . 
\end{align*}
Now the first equality of $(\ref{B}.1)$ implies 
\[ 
\sum_{j = 0}^{n - 1} \left(\, \sum_{i = j + 1}^n 
\binom{i}{j} \, \xi_i(x_1, x_2) \, (\alpha_1 \, x_1 + \alpha_2 \, x_2)^{i - j} \, \right)  x_0^j
 = 
\beta_1 \, f_1 + \beta_2 \, f_2 . 
\]
Since $f_1, f_2 \in k[x_1, x_2]$, we have 
\begin{align}
\tag{\ref{B}.2} 
\left\{ 
\begin{array}{l}
\ds  \sum_{i = 1}^n \xi_i(x_1, x_2) \,  (\alpha_1 \, x_1 + \alpha_2 \, x_2)^i 
 = \beta_1 \, f_1 + \beta_2 \, f_2 , \\
\ds \sum_{i = j + 1}^n \binom{i}{j} \, \xi_i(x_1, x_2) \, 
(\alpha_1 \, x_1 + \alpha_2 \, x_2)^{i - j} = 0 
\qquad \text{ for all \; $1 \leq  j \leq n - 1 $}  . 
\end{array}
\right. 
\end{align}
Consider the highest term in $T$ appearing in both sides of the first equality of $(\ref{B}.2)$. 
So, write 
\begin{align*}
\left\{ 
\begin{array}{r @{\,} l @{\qquad} l @{\qquad} l }
 \alpha_2 & = \ds \sum_{\imath = 0}^e c_\imath \, T^{p^\imath} , 
 & c_\imath \in k \quad (0 \leq \imath \leq e) , 
 & c_e \ne 0 , \\ [1.5em] 
 \beta_2 & = \ds \sum_{\jmath = 0}^f d_\jmath \, T^{p^\jmath} , 
 & d_\jmath \in k \quad (\, 0 \leq \jmath \leq f \,) , 
 & d_f \ne 0 . 
\end{array} 
\right. 
\end{align*}
Since $\deg_T \alpha_2 > \deg_T \alpha_1$ and $\deg_T \beta_2 > \deg_T \beta_1$, 
we have 
\[
 \xi_n(x_1, x_2) \, (c_e \, T^{p^e})^n = d_f \, T^{p^f} \, f_2 , 
\]
and thereby have $f \geq e$. 
Similarly, consider the birational map $\sigma^{-1}$, which gives an birational equivalence 
between the exponential matrices $B(T)$ and $A(T)$. We also have $e \geq f$. 
Thus
\[
 \deg_T \alpha_2 = \deg_T \beta_2 . 
\]
Now the first equality of $(\ref{B}.2)$ implies $n = 1$ and 
\begin{align}
\tag{\ref{B}.3}
 \xi_1(x_1, x_2) \, (\alpha_1 \, x_1 + \alpha_2 \, x_2) = \beta_1 \, f_1 + \beta_2 \, f_2 . 
\end{align}
Comparing the coefficients of $T^{p^e}$ of both sides of the above equality, 
we have 
\[
 \xi_1(x_1, x_2) \cdot c_e \cdot x_2 = d_e \cdot f_2 . 
\]
So, letting $\varphi_2 := (c_e / d_e) \, x_2$, we have $f_2 = \xi_1(x_1, x_2) \, \varphi_2$. 
From equality $(\ref{B}.3)$, we can obtain $f_1 = \xi_1(x_1, x_2) \, \varphi_1$ for some 
$\varphi_1 \in k[x_1, x_2]$. 
Now, equality $(\ref{B}.3)$ implies 
\begin{align}
\tag{\ref{B}.4}
 \alpha_1 \, x_1 + \alpha_2 \, x_2 = \beta_1 \, \varphi_1 + \beta_2 \, \varphi_2 . 
\end{align}
So, $\varphi_1$ is a homogenous polynomial of $k[x_1, x_2]$ of degree $1$. 
We can express $\varphi_1$ and $\varphi_2$ as 
\[
\left\{ 
\begin{array}{r @{\,} l  @{\,} l }
 \varphi_1 & = s \, x_1 + & t \, x_2 , \\
 \varphi_2 & =  & v \, x_2 \qquad (\, s, t, v \in k \,) . 
\end{array}
\right. 
\]
Considering the coefficients of $x_1$ and $x_2$ of equality $(\ref{B}.4)$, we have  
\[
\bigl(\, 
 \alpha_1(T) \; \; \alpha_2(T) 
\, \bigr)
 = 
\bigl(\, 
 \beta_1(T) \; \; \beta_2(T) 
\, \bigr)
\; 
Q , 
\]
where 
\[
 Q :=  \left(
\begin{array}{c c}
 s & t \\
 0 & v 
\end{array}
\right) . 
\]
Since $\alpha_1$ and $\alpha_2$ are linearly independent over $k$, 
the matrix $Q$ is regular.

We next prove the implication (2) $\Longrightarrow$ (1). 
Let $\sigma : \mathbb{P}^2_k \dasharrow \mathbb{P}^2_k$ be the 
birational map defined by 
\[
 \sigma (x_0 \, : \, x_1 \, : \, x_2) 
 = 
(x_0 \, : \, x_1 \, : \, x_2)  
\left(
\begin{array}{c | c c}
1 & 0 & 0 \\
\hline 
 0 & \multicolumn{2}{c}{ \multirow{2}{*}{${^t}Q$} }\\
 0 & 
\end{array} 
\right) . 
\]
This birational map $\sigma$ gives an birational equivalence between 
the exponential matrices $A(T)$ and $B(T)$.

\end{proof}

\subsubsection{$(\, \mf{A}_{1, \, 1}^E  \,)^\indep$ and  $( \, \mf{A}_{1, \, 2}^E \,)^\indep$ 
are not birationally equivalent}

\begin{lem}
\label{4.7}
Let $A(T) \in ( \, \mf{A}_{1, \, 1}^E \,)^\indep$ 
and let $B(T) \in (\, \mf{A}_{1, \, 2}^E  \,)^\indep$. 
Then the exponential matrices $A(T)$ and $B(T)$ are not birationally equivalent. 
\end{lem}

\begin{proof} 
We can experss $A(T)$ and $B(T)$ as 
\begin{align*}
\begin{array}{r @{\,} l @{\qquad} l}  
A(T) 
& = 
\left(
\begin{array}{c c c}
 1 & 0 & \alpha(T) \\
 0 & 1 & 0 \\
 0 & 0 & 1
\end{array} 
\right) 
 & 
\left( 
\begin{array}{l}
\alpha \in \sfP , \quad 
\alpha\ne 0 
\end{array} 
\right) , 
\\ [2.5em] 
 B(T) 
& = 
\left(
\begin{array}{c c c}
 1 & \beta_1(T) & \beta_2(T) \\
 0 & 1 & 0 \\
 0 & 0 & 1
\end{array} 
\right) 
&
\left(
\begin{array}{l}
\beta_1(T) , \beta_2(T) \in \sfP , \\
\text{$\beta_1(T)$ and $\beta_2(T)$ are linearly independent over $k$}
\end{array}
\right) . 
\end{array}
\end{align*}
Suppose to the contrary that $A(T)$ and $B(T)$ are birationally equivalent. 
So, there exists a birational map $\sigma : \mathbb{P}^2_k \dasharrow \mathbb{P}^2_k$ 
such that the following diagram is commutative: 
\[
\xymatrix@R=36pt@C=36pt@M=6pt{
 \G_a \times \mathbb{P}^2_k \ar[r]^(.6){A(T)} \ar@{-->}[d]_{1 \times \sigma} 
 & \mathbb{P}^2_k \ar@{-->}[d]^\sigma \\
 \G_a \times \mathbb{P}^2_k \ar[r]_(.6){B(T)}
 & \mathbb{P}^2_k 
}
\]
We consider the case 
\[
\deg_T \beta_1(T) < \deg_T \beta_2(T) . 
\] 
The other cases can be reduced to the above case. 
Write 
\[
 \sigma(x_0 \, : \, x_1 \, : \, x_2 ) 
 = 
(f_0 \, : \, f_1 \, : \, f_2) , 
\]
where $f_0, f_1, f_2 \in k[x_0, x_1, x_2]$ are homogenous polynomials of the same degree. 
We can obtain $f_1, f_2 \in k[x_1, x_2]$ and 
\[
 f_0 + \beta_1 \, f_1 + \beta_2 \, f_2 
 =
 f_0(x_0 + \alpha \, x_2, x_1, x_2) , 
\]
which implies 
\begin{align}
\tag{\ref{4.7}.1}
\deg_T \beta_2 \geq \deg_T \alpha . 
\end{align}
Consider $\sigma^{-1}$, which is a birational equivalence between $B(T)$ and $A(T)$. 
Write 
\[
 \sigma^{-1}(x_0 \, : \, x_1 \, : \, x_2 ) 
 = 
(g_0 \, : \, g_1 \, : \, g_2) , 
\]
where $g_0, g_1, g_2 \in k[x_0, x_1, x_2]$ are homogenous polynomials of the same degree.  
We can express $g_0$ as 
\begin{align*}
 g_0(x_0, x_1, x_2) = \sum_{i = 0}^n \xi_i(x_1, x_2) \, x_0^i , 
\qquad 
\xi_i (x_1, x_2) \in k[x_1, x_2] \quad ( \, 0 \leq i \leq n \, ) , 
\qquad 
\xi_n(x_1, x_2) \ne 0 .  
\end{align*}
We can obtain $g_1, g_2 \in k[x_1, x_2]$ and 
\begin{align*}
\ds  \sum_{i = 1}^n \xi_i(x_1, x_2) \,  (\beta_1 \, x_1 + \beta_2 \, x_2)^i 
 = \alpha \, g_2 . 
\end{align*}
So, we have 
\begin{align}
\tag{\ref{4.7}.2}
\deg_T \beta_2 \leq \deg_T \alpha . 
\end{align}
By (\ref{4.7}.1) and (\ref{4.7}.2), we obtain $\deg_T \beta_2 = \deg_T \alpha$. 
Therefore,  
\[
 \xi_1(x_1, x_2) \,  (\beta_1 \, x_1 + \beta_2 \, x_2) 
 = \alpha \, g_2 , 
\] 
which implies $g_2 = \xi_1(x_1, x_2) \cdot \lambda \, x_2$ for some $\lambda \in k \backslash \{ \, 0 \, \}$. 
Thus 
\[
 \beta_1 \, x_1 + \beta_2 \, x_2 = \lambda \, \alpha \, x_2 . 
\]
Comparing the coefficients of $x_1$ in both sides of the above equality, 
we have $\beta_1 = 0$. 
This is a contradiction. 

\end{proof}

\subsection{Proof of Theorem 0.3}

We shall prove Theorem 0.3 by separating the cases $p = 2$ and $p \geq 3$. 
\medskip 

\noindent 
{\bf The case $p = 2$.} 
By Lemmas 4.1 and 4.3, we have 
\begin{align*}
 \Mat(3, k[T])^E 
 & \leadsto 
 \mf{A}_{1, \, 2}^E \;  \cup \; \mf{A}_{2, \, 1}^E \\
 & \overset{\rm bir}{\leadsto}  
 \bigl( \, ( \, \mf{A}_{1, \, 2}^E \,)^\indep \, \sqcup \, (\, \mf{A}_{1, \, 1}^E  \,)^\indep 
\, \sqcup \, \{ \, I_3 \,\} \, \bigr) 
 \; \cup \; ( \, \mf{A}_{1, \, 1}^E \, )  \\
 & = 
  ( \, \mf{A}_{1, \, 2}^E \,)^\indep \, \sqcup \, (\, \mf{A}_{1, \, 1}^E  \,)^\indep 
\, \sqcup \, \{ \, I_3 \,\} . 
\end{align*}
Consider any two of the above disjoint three sets. 
Choose exponential matrices one-by-one from the two sets. 
By Lemmas 1.1 and 4.7,  the chosen two exponential matrices are 
not birationally equivalent. 
Thus 
\begin{align*}
\Mat(3, k[T])^E / \overset{\rm bir}{\sim} 
 & = 
\left( \; 
  ( \, \mf{A}_{1, \, 2}^E \,)^\indep \, \sqcup \, (\, \mf{A}_{1, \, 1}^E  \,)^\indep 
\, \sqcup \, \{ \, I_3 \,\} 
\; \right) 
/ \overset{\rm bir}{\sim} \\
 & = 
  ( \, \mf{A}_{1, \, 2}^E \,)^\indep / \overset{\rm bir}{\sim}
 \; \; \sqcup \; \; (\, \mf{A}_{1, \, 1}^E  \,)^\indep  / \overset{\rm bir}{\sim}
 \; \; \sqcup \; \; \{ \, I_3 \,\} . 
\end{align*}
By Lemmas \ref{A} and \ref{B}, we can obtain a one-to-one correspondence 
between the above set 
and the set 
\[
 \{\, 0 \,\} 
\; \; \sqcup \; \; 
\bigl( \; 
 (\, \sfP \,)^\indep / \sim_{\GL(1, k)} 
\; \bigr)
\; \; \sqcup \; \; 
\bigl(\; 
(\, \sfP^2 \,)^\indep / \sim_{\GL(2, k)}
\; \bigr) . 
\]
\medskip

\noindent 
{\bf The case $p \geq 3$.} 
By Lemmas 4.1 and 4.3, we have 
\begin{align*}
 \Mat(3, k[T])^E 
 & \leadsto 
 \mf{A}_{1, \, 2}^E \; \cup \; \mf{A}_{2, \, 1}^E \; \cup \; \mf{J}_{[3]}^E \\
 & \overset{\rm bir}{\leadsto}  
 \bigl( \, ( \, \mf{A}_{1, \, 2}^E \,)^\indep \, \sqcup \, (\, \mf{A}_{1, \, 1}^E  \,)^\indep 
\, \sqcup \, \{ \, I_3 \,\} \, \bigr) \\
& \qquad 
 \; \cup \; ( \, \mf{A}_{1, \, 1}^E \, ) 
 \; \cup \;   ( \, \mf{A}_{2, \, 1}^E \, ) \\
 & \overset{\rm bir}{\leadsto}  
  ( \, \mf{A}_{1, \, 2}^E \,)^\indep \, \sqcup \, (\, \mf{A}_{1, \, 1}^E  \,)^\indep 
\, \sqcup \, \{ \, I_3 \,\} . 
\end{align*}
Thus we can obtain the desired one-to-one correspondence.

\end{document}